\documentclass[11pt,reqno]{amsart}

\usepackage{amsmath,amssymb,amsfonts,amsthm,latexsym,enumerate,tikz}
\usepackage[
  colorlinks=true,
  linkcolor=red,
  citecolor=red,
  urlcolor=red,
  pagebackref=true
]{hyperref}

\newcommand{\aut}{\mathrm{Aut}}

\newcommand{\id}{\mathrm{id}}
\newcommand{\sym}{\mathrm{Sym}}

\newcommand{\TFA}{\mathrm{TFA}}

\begin{document}

\newtheorem{theorem}{Theorem}[section]
\newtheorem{lemma}[theorem]{Lemma}
\newtheorem{corollary}[theorem]{Corollary}
\newtheorem{proposition}[theorem]{Proposition}
\newtheorem{conjecture}[theorem]{Conjecture}
\theoremstyle{definition}
\newtheorem{definition}[theorem]{Definition}
\newtheorem{problem}[theorem]{Problem}
\newtheorem{claim}[theorem]{Claim}
\newtheorem{question}[theorem]{Question}
\newtheorem{remark}[theorem]{Remark}
\newtheorem{answer}[theorem]{Answer}

\def\Answer{\par\noindent{\em Answer to Question}}

\date{}

\title[Stability of nontrivial graph pairs]{Stability of nontrivial graph pairs}

\author{Xiaomeng Wang}
\address{School of Mathematics and Statistics, Lanzhou University, Lanzhou, Gansu 730000, P.\,R. China}
\email{wangxiaomeng@lzu.edu.cn}

\author{Xing Gao}
\address{School of Mathematics and Statistics, Lanzhou University
Lanzhou, 730000, P.\,R. China; Gansu Provincial Research Center for Basic Disciplines of Mathematics
and Statistics, Lanzhou, 730070, P.\,R. China}
\email{gaoxing@lzu.edu.cn}


\begin{abstract}
A graph pair $(\Gamma, \Sigma)$ is called stable if every automorphism of the direct product $\Gamma\times\Sigma$ is induced componentwise by automorphisms of $\Gamma$ and $\Sigma$. A graph is twin-free if no two distinct vertices share the same neighbourhood in the graph. Two graphs $\Gamma$ and $\Sigma$ are coprime with respect to the direct product if there is no graph $\Delta$ of order greater than $1$ such that $\Gamma\cong\Gamma'\times\Delta$ and $\Sigma\cong\Sigma'\times\Delta$ for some graphs $\Gamma'$ and $\Sigma'$.
A graph pair $(\Gamma,\Sigma)$ is nontrivial if $\Gamma$ and $\Sigma$ are coprime connected twin-free graphs and exactly one of them is bipartite. In this paper, we prove that if $\Gamma$ is non-bipartite, stable, and factor-loopless, then each nontrivial graph pair $(\Gamma,\Sigma)$ is stable. This gives a partial answer to [Question~19, Qin, Xia and Zhou, Discrete Math., 113856, (2024)] and proves the factor-loopless case of [Conjecture~1.3, Wang, Qin and Xia, arXiv:2509.26170]. We also give affirmative answers to [Questions~3.5, 3.6, Gan, Liu and Xia, J. Combin. Theory Ser. B, 140--164, (2025)] and a negative answer to [Question~3.7, Gan, Liu and Xia, J. Combin. Theory Ser. B, 140--164, (2025)].

\medskip

\textit{Key words:} direct product, automorphism group, stability of graph pairs.
\medskip

\textit{MSC2020:} 05E18.
\end{abstract}

\maketitle

\section{Introduction}\label{sec1}

\subsection{Motivation}
Unless explicitly stated otherwise, all graphs considered in the main results are finite, undirected, loopless, and without multiple edges.
We follow the terminology and notation of
\cite{Big01} for graphs and \cite{Hun00} for groups.
For a graph $\Gamma$, we denote by $V(\Gamma)$, $A(\Gamma)$ and $E(\Gamma)$ its vertex set, arc set and edge set, respectively.
The neighbourhood of a vertex $u$ in $\Gamma$ is denoted by $N_\Gamma(u)$, or simply $N(u)$ when no confusion arises.
Let $\Gamma$ be a graph. The arc $(u, v)$ between $u$ and $v$ is an ordered pair of adjacent vertices.
The edge between two adjacent vertices $u, v$ of $\Gamma$ is denoted by $\{u, v\}$.
For any integer $n\geq 2$, $K_n$, $C_n$, $P_n$ stand for the complete graph, cycle and path on $n$ vertices, respectively.
An {\em automorphism} of a graph $\Gamma$ is a permutation $\sigma$ of $V(\Gamma)$ such that for any vertices $u, v$ of $\Gamma$, $\{u,v\}$ is an edge of $\Gamma$ if and only if $\{u^\sigma,v^\sigma\}$ is an edge of $\Gamma$. The set of all automorphisms of $\Gamma$, under composition, forms a group, called the \emph{automorphism group} of $\Gamma$ and is denoted by $\aut(\Gamma)$.

The {\em direct product} \cite{HIK2011} of two graphs $\Gamma$ and $\Sigma$, denoted by $\Gamma \times \Sigma$, is the graph with vertex set $V(\Gamma) \times V(\Sigma)$ and edge set $\{\{(u,v),(x,y)\}:\{u,x\}\in E(\Gamma), \{v,y\}\in E(\Sigma)\}$.
There is always a natural embedding
\begin{equation}\label{eq:direct}
\aut(\Gamma)\times\aut(\Sigma)\leq\aut(\Gamma\times\Sigma),
\end{equation}
where the product $\aut(\Gamma)\times\aut(\Sigma)$ acts componentwise. An automorphism $\tau\in\aut(\Gamma\times\Sigma)$ is called
\emph{unexpected} if $\tau\notin \aut(\Gamma)\times\aut(\Sigma)$. A pair $(\Gamma,\Sigma)$ is called \emph{stable} if
equality holds in \eqref{eq:direct}; otherwise it is called {\em unstable}.

The stability of a single graph, introduced in~\cite{MSZ1989}, is recovered from this definition by taking the second factor to be $K_2$. Indeed, a graph $\Gamma$ is stable in the classical sense if and only if the pair $(\Gamma, K_2)$ is stable.
The stability of graphs and graph pairs has attracted considerable attention; see, for example,~\cite{FH2022,HMM2021,LMS2015,MSZ1989,MSZ1992,Morris2021,NS1996,QXZ2019,QXZ2021,Surowski2001,Surowski2003,Wilson2008} for graphs and~\cite{GLX2025,QXZ2024,QXZZ2021,WQX2025,WXZ2025} for graph pairs.

A graph is called \emph{twin-free} (or, \emph{$R$-thin} \cite{WXZ2025}, \emph{vertex-determining} \cite{QXZ2019}) if no two distinct vertices have the same neighbourhood in the graph. Two graphs $\Gamma$ and $\Sigma$ are \emph{coprime} with respect to the direct product if there is no graph $\Delta$ of order greater than $1$ such that $\Gamma\cong\Gamma'\times\Delta$ and $\Sigma\cong\Sigma'\times\Delta$ for some graphs $\Gamma'$ and $\Sigma'$.

A classical result on direct products states that every pair of coprime, connected, non-bipartite, twin-free graphs is stable; see \cite[Theorem~8.18]{HIK2011}.
Together with \cite[Lemmas~3.3, 3.5]{QXZZ2021}, this reduces the central investigation of the stability problem to graph pairs in which exactly one factor is bipartite.

A graph pair $(\Gamma,\Sigma)$ is \emph{nontrivial} if $\Gamma$ and $\Sigma$ are coprime connected twin-free graphs and exactly one of them is bipartite. Moreover, a nontrivial graph pair is called \emph{nontrivially unstable} if it is unstable.

The following question, due to Qin, Xia and Zhou, is a central motivation for this paper.

\begin{question}[{\cite[Question~19]{QXZ2024}}]\label{que}
\label{que:queqxzz}
Let $(\Gamma,\Sigma)$ be a nontrivial graph pair with $\Sigma$ bipartite. Under what conditions does it hold that $(\Gamma, \Sigma)$ is stable if and only if $\Gamma$ is stable?
\end{question}

This question asks when the stability of the graph pair can be completely reduced to the stability of the non-bipartite factor. Even when one imposes strong restrictions on the two factors, this reduction is far from automatic. Along this line, Wang, Qin and Xia proposed the following conjecture.

\begin{conjecture}\cite[Conjecture~1.3]{WQX2025}\label{conj:conjsps}
Let $(\Gamma,\Sigma)$ be a nontrivial graph pair without loops, where $\Sigma$ is bipartite. Then $(\Gamma,\Sigma)$ is stable if and only if $\Gamma$ is stable.
\end{conjecture}

The present paper proves this conjecture under the additional assumption that the non-bipartite factor is factor-loopless.

\subsection{Main results}
Let
\[\Gamma\cong\Gamma_1\times\cdots\times\Gamma_n
\]
 be the direct-prime factorization of a connected non-bipartite twin-free graph in the loops-allowed category for direct products. We call $\Gamma$ factor-loopless if each direct-prime factor $\Gamma_i$ is loopless. For an integer $q\geq 1$, let $\Gamma^q:=\Gamma\times\cdots\times\Gamma$ denote the direct product of $q$ copies of $\Gamma$. Our main result is the following theorem.

\begin{theorem}
\label{th:thgpsg}
Let $(\Gamma,\Sigma)$ be a nontrivial graph pair, where $\Gamma$ is factor-loopless and $\Sigma$ is bipartite. Then $(\Gamma,\Sigma)$ is stable if and only if $\Gamma$ is stable.
\end{theorem}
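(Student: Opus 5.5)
One direction is straightforward and I would dispose of it first: if $(\Gamma,\Sigma)$ is stable then $\Gamma$ is stable. Since $\Sigma$ is connected and bipartite with at least one edge, I would relate $\Gamma\times\Sigma$ to $\Gamma\times K_2\times\Sigma$. Concretely, $\Gamma\times\Sigma$ is the fibre of $(\Gamma\times K_2)\times\Sigma$ over one colour class pairing, because $\Sigma\times K_2\cong 2\Sigma$. Given an unexpected automorphism $\tau$ of $\Gamma\times K_2$, I would write it in the standard form that swaps the two layers differently on different parts. Then I would transport it to $\Gamma\times\Sigma$ by acting as $\tau$ "colour-class-wise" along the bipartition of $\Sigma$, that is, $(u,v)\mapsto (u^{\tau_{c(v)}},v)$. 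One checks that this is not componentwise, using twin-freeness and coprimality. This is essentially a known reduction, so I expect it to be routine.

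For the converse, assume $\Gamma$ is stable and factor-loopless, and let $\varphi\in\aut(\Gamma\times\Sigma)$. The plan is to use the unique prime factorization in the loops-allowed category for connected non-bipartite twin-free graphs. Write $\Gamma=\Gamma_1\times\cdots\times\Gamma_n$. The key device is to pass to $\Gamma\times\Sigma\times K_2$, or alternatively to $(\Gamma\times\Sigma)^{\ell}$, where $\ell$ denotes adding loops. Here the product becomes $\Gamma^{\ell}\times\Sigma^{\ell}$, or the relevant non-bipartite structure, and it is twin-free and non-bipartite. Hence \cite[Theorem~8.18]{HIK2011}-type rigidity applies: automorphisms permute prime factors of isomorphic types.

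The main step is to show that $\varphi$ preserves the partition of prime factors into those coming from $\Gamma$ and those coming from $\Sigma$. Factor-looplessness is what should make this work. The $\Gamma_i$ are loopless, while any factor of the bipartite side, after the canonical lift that makes $\Sigma$ non-bipartite, must carry loops or be $K_2$-related. So no $\Gamma_i$ can be exchanged with a factor from the $\Sigma$ side, and coprimality rules out common factors. I would formalize this using the Cartesian skeleton $S(\Gamma\times\Sigma)$. Since $\Gamma\times\Sigma$ is connected twin-free, $\varphi$ is an automorphism of the skeleton, and the skeleton factors as $S(\Gamma)\,\square\, S(\Sigma)$ up to the bipartite correction. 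The Cartesian factorization theorem then gives that $\varphi$ respects the product structure, up to a permutation of isomorphic Cartesian factors. Factor-looplessness together with coprimality forbids mixing.

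Once $\varphi$ preserves the decomposition, I would obtain $\varphi(u,v)=(\alpha_v(u),\beta(v))$-type maps. Stability of $\Gamma$, in the form of the pair $(\Gamma,K_2)$ being stable, would then force $\alpha_v$ to be independent of $v$, up to the two colour classes. Twin-freeness of $\Sigma$ would glue the two classes, giving $\varphi\in\aut(\Gamma)\times\aut(\Sigma)$.

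I expect the hardest step to be the skeleton/factor-mixing argument. The bipartite factor has no unique prime factorization, and the skeleton of a bipartite product is disconnected. I would handle this by working with the loops-allowed product $\Gamma\times\Sigma\times K_2^{\ell}$, or by restricting to one component of $S$, and checking carefully that a factor swap would produce a looped prime factor of $\Gamma$, contradicting the hypothesis.
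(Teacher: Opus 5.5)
Your ``only if'' direction is fine. A non-diagonal $(\alpha,\beta)\in\TFA(\Gamma)$ acting as $\alpha$ over $U$ and as $\beta$ over $W$ is visibly not componentwise; twin-freeness and coprimality are not even needed, and the paper simply cites \cite[Theorem~1.4]{WQX2025}. Your last step is essentially Lemma~\ref{le:lepn}, with one correction: what makes $\alpha_v$ constant is stability of $\Gamma$ applied to $(\alpha_i,\alpha_j)$ for each edge $\{i,j\}$ of $\Sigma$, together with connectivity of $\Sigma$, not twin-freeness of $\Sigma$.

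The gap is the middle step, ``factor-looplessness together with coprimality forbids mixing.'' This is the entire content of the theorem, and you give no mechanism for it. The devices you propose do not work. $(\Gamma\times\Sigma)^{\ell}$ has the right automorphism group but is not $\Gamma^{\ell}\times\Sigma^{\ell}$; the latter is the looped strong product, whose adjacency automorphisms of $\Gamma\times\Sigma$ need not preserve. And $\Gamma\times\Sigma\times K_2\cong 2(\Gamma\times\Sigma)$ is bipartite and disconnected, so \cite[Theorem~8.18]{HIK2011} is unavailable. Your heuristic that no loopless $\Gamma_i$ can be exchanged with a factor coming from $\Sigma$ is also aimed at the wrong objects. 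A $\Sigma$-mixer exchanges Cartesian-prime factors of $S(\Gamma)$ and $S^{\pm}(\Sigma)$, not direct-prime factors. In the paper's own example, $\Delta$ is direct-prime (Lemma~\ref{le:lecps}), yet $S(\Delta)$ is the $4$-cycle $K_2\Box K_2$. The mixer $\phi_R$ swaps one $K_2$ with $S^{+}(\Omega)$ and the other with $S^{-}(\Omega)$, so no direct factor moves at all.

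What is actually needed is to normalize via Lemma~\ref{claim1} to an $S$-automorphism $\rho$ stabilizing $X$, and then split into the one-sided and balanced cases.
\begin{itemize}
\item \emph{One-sided case.} The $\Gamma$-coordinate of $((m,r),(n,t))^{\rho}$ does not depend on $m$, while $\rho_-=(\lambda,\eta)$. Pairing $((m,r),(f(m),t_0))$ with a $W$-neighbour gives $(\xi_f,\lambda)\in\TFA(\Gamma)$ for every bijection $f\colon V(M)\to V(N)$. Two different bijections $f,g$ then give $(\xi_g\xi_f^{-1},\id)\in\TFA(\Gamma)$ with $\xi_g\ne\xi_f$, which contradicts twin-freeness of $\Gamma$ (Lemma~\ref{le:leosmi}), or its stability (Lemma~\ref{pr:prro}). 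Looplessness plays no role here.
\item \emph{Balanced case.} This is the only place looplessness enters. The exchange property of \cite[Lemma~4.1]{GLX2025}, applied to an edge $\{(c_1,c_2),(d_1,d_2)\}$ and to its reverse, forces a loop at $(d_1,c_2)$. The loop lies in $\Gamma$, or, when $D_3$ is nontrivial, in the factor $\Delta_{12}$ of the splitting $\Gamma\cong\Delta_{12}\times\Delta_3$ from \cite[Lemma~4.3]{GLX2025} (Lemma~\ref{le:lelp}).
\end{itemize}
The balanced argument uses the swaps on both sides of the bipartition simultaneously. So your fallback of ``restricting to one component of $S$'' would discard exactly the information that produces the loop: a swap on one component alone yields no loop and is killed by twin-freeness instead.
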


 Theorem~\ref{th:thgpsg} gives a partial answer to Question~\ref{que:queqxzz}, and proves the factor-loopless case of Conjecture~\ref{conj:conjsps}.
 The ``only if'' part of Theorem \ref{th:thgpsg} follows from \cite[Theorem~1.4(a)]{WQX2025}.
 The main content of
 Theorem \ref{th:thgpsg} is the converse implication: if the non-bipartite factor $\Gamma$ is stable and factor-loopless, then no bipartite coprime twin-free graph $\Sigma$ can produce unexpected automorphisms.
To prove Theorem \ref{th:thgpsg}, we introduce the following definition.

\begin{definition}
\label{de:desymp}
A connected non-bipartite stable graph $\Gamma$ is called \emph{Sym-prime} if every nontrivial graph pair $(\Gamma,\Sigma)$ with $\Sigma$ connected and bipartite is stable.
\end{definition}

 By Definition \ref{de:desymp}, a Sym-prime graph cannot become part of an unstable graph pair by taking the direct product with a connected coprime bipartite twin-free factor.
By \cite[Theorem~1.7]{GLX2025} and \cite[Theorem~1.5]{WQX2025}, $K_n$ ($n\geq 3$) and $C_{2k+1}$ ($k\geq 1$) are Sym-prime.

The first key step is the prime power case.

\begin{theorem}
\label{th:thpsp}
Let $\Gamma$ be a connected, non-bipartite twin-free, and loopless graph which is prime with respect to the direct product. If $\Gamma^q$ is stable for some integer $q\geq 1$, then $\Gamma^q$ is Sym-prime.
\end{theorem}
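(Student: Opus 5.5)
Let $\Sigma$ be a connected bipartite twin-free graph coprime to $\Gamma^q$, and let $\tau\in\aut(\Gamma^q\times\Sigma)$. We must show that $\tau\in\aut(\Gamma^q)\times\aut(\Sigma)$. The natural tool is unique prime factorization of connected non-bipartite graphs in the loops-allowed category, together with the description of automorphisms of such products as permutations of isomorphic prime factors combined with factor automorphisms (Hammack--Imrich--Klav\v{z}ar, Theorem~8.18 and its loops-allowed version). Since $\Sigma$ is bipartite, we cannot apply this directly.

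The first step is to replace $\Sigma$ by a non-bipartite relative. Write $\Sigma\times K_2\cong 2\Sigma$, or better, pass to $\Sigma$ with loops attached on one part. The usual trick (Qin--Xia--Zhou, Lemmas~3.3 and~3.5) is as follows. Since $\Gamma^q\times\Sigma$ is connected and bipartite with bipartition $V(\Gamma^q)\times B_1$ and $V(\Gamma^q)\times B_2$, the automorphism $\tau$, possibly composed with a part-swapping automorphism of $\Sigma$, preserves each part. Restricting the product to one part gives $\Gamma^q\times\Sigma \cong (\Gamma^q\times K_2)\times_{K_2}\Sigma$. Equivalently, $\Gamma^q\times\Sigma\cong(\Gamma^q\times K_2)$ glued with the bipartite double structure. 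I would instead express $\Gamma^q\times\Sigma$ as a factor of a product in which the relevant graph is $\Gamma^q\times K_2$ and $\Sigma$ is viewed through its ``half-square'' or through a looped graph $\Sigma^\circ$ with $\Sigma\cong \Sigma^\circ\times K_2$ when such a $\Sigma^\circ$ exists. The useful fact is that every connected bipartite twin-free graph $\Sigma$ is the canonical double cover of a possibly looped, possibly non-unique graph. The stability of $\Gamma^q$ means that $\aut(\Gamma^q\times K_2)=\aut(\Gamma^q)\times \mathbb{Z}_2$.

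The key step is to use primeness and looplessness of $\Gamma$. An unexpected $\tau$ would have to mix coordinates of $\Gamma^q$ with coordinates of $\Sigma$. Via the prime factorization in the loops-allowed category, such mixing is possible only if some prime factor of $\Gamma$ is isomorphic to a prime factor $\Delta$ of a looped graph $\Sigma^\circ$ with $\Sigma^\circ\times K_2\cong\Sigma$. Both occur after tensoring with $K_2$: factors of $\Gamma^q\times K_2$ are $q$ copies of $\Gamma$ and $K_2$, while $\Sigma$ contributes factors including $K_2$. I would argue that $\tau$ permutes the $\Gamma$-isotypic component. In a factorization $\Gamma^q\times\Sigma\cong\Gamma^{q}\times\Sigma^\circ\times K_2$, coprimality forbids $\Sigma^\circ$ from having a $\Gamma$ factor whenever $\Gamma\times K_2$ divides $\Sigma$. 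Otherwise, $\Sigma^\circ$ could have a looped factor in which $\Gamma$ sits, and the loopless prime $\Gamma$ cannot be isomorphic to a looped prime. The $\Gamma$-isotypic part of the factorization is therefore exactly $\Gamma^q$, and $\tau$ preserves the decomposition $\Gamma^q\times(\text{rest})$ up to the $K_2$-swapping ambiguity. Stability of $\Gamma^q$ then resolves this ambiguity, giving $\tau=(\alpha,\beta)$.

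I expect the main obstacle to be the step in which $\Sigma$ is handled when it is not a canonical double cover of a unique looped graph, or when such a $\Sigma^\circ$ does not exist. In that situation unique factorization fails, because bipartite graphs factor non-uniquely. I would first try to work with the two halves: consider the graph on $V(\Gamma^q)\times B_1$ whose edges are given by common neighbours, that is, the distance-two graph. This equals $(\Gamma^{q})^{(2)}\times\Sigma^{(2)}|_{B_1}$, where ${}^{(2)}$ denotes the looped ``square'' graph. This product is non-bipartite and looped, so unique factorization applies to it. Since $\Gamma$ is loopless, prime and non-bipartite, the factors of $\Gamma^{(2)}$ can be identified, and we can then lift back using twin-freeness.
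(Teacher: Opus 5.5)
\textbf{There is a genuine gap at the key step.} You claim that an unexpected automorphism can mix coordinates only if some direct-prime factor of $\Gamma$ is isomorphic to a prime factor of a looped $\Sigma^\circ$ with $\Sigma\cong\Sigma^\circ\times K_2$, and you use looplessness only to rule out that isomorphism.

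Nothing justifies this claim. The theorem describing automorphisms of a direct product as factor automorphisms composed with permutations of isomorphic prime factors requires the product to be connected, non-bipartite and twin-free. But $\Gamma^q\times\Sigma^\circ\times K_2$ is bipartite. Rewriting its automorphism group as $\TFA(\Gamma^q\times\Sigma^\circ)\rtimes\mathbb{Z}_2$ does not help, because TF-automorphisms are not automorphisms and are not controlled by the direct-prime factorization.

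In fact the claim is false, and the paper's own example refutes your argument. Take $\Delta$ and $\Omega\cong P_4$ from Section~\ref{sec:seca}:
\begin{itemize}
\item $\Delta$ is connected, non-bipartite, twin-free, prime and stable (Lemma~\ref{le:lecps});
\item $\Omega\cong\Sigma^\circ\times K_2$, where $\Sigma^\circ$ is the two-vertex graph with one edge and a loop at one end.
\end{itemize}
Since $\Sigma^\circ$ is prime of order $2$, it is not isomorphic to $\Delta$. So your reasoning applies verbatim to $(\Delta,\Omega)$, never uses looplessness, and would prove $(\Delta,\Omega)$ stable. Yet $\phi_R$ from Lemma~\ref{le:letsa} is an $\Omega$-mixer (Lemma~\ref{le:legsnu}). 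Looplessness must therefore enter through a different mechanism.

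\textbf{The missing idea.} Mixing takes place among the Cartesian-prime factors of the Cartesian skeleton, which are much finer than direct-prime factors. In the example, $S(\Delta)\cong K_2\Box K_2$ and $S^{\pm}(\Omega)\cong K_2$; $\phi_R$ exchanges one $K_2$-factor of $S(\Delta)$ with $S^{+}(\Omega)$ and the other with $S^{-}(\Omega)$. The paper therefore works in $S(\Gamma^q)\Box S(\Sigma)$, where Cartesian unique factorization and the Cartesian automorphism theorem apply even though $\Sigma$ is bipartite. The argument then runs as follows.
\begin{itemize}
\item Stability of $\Gamma^q$ is used (Lemma~\ref{le:lepn}) to ensure that an unstable pair admits a $\Sigma$-mixer, not to resolve a $K_2$-swap.
\item After normalizing with Lemma~\ref{claim1}, a one-sided mixer yields a non-diagonal TF-automorphism $(\xi_f,\zeta)$ of $\Gamma^q$ (Lemma~\ref{pr:prro}).
\item A balanced mixer forces, via \cite[Lemma~4.1]{GLX2025}, a loop in the direct factor $\Delta_{12}$, hence in a direct-prime factor of $\Gamma^q$, i.e.\ in $\Gamma$ (Lemma~\ref{le:lelp}).
\end{itemize}

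\textbf{Your fallback does not supply this.} The common-neighbour graph on one half is reflexive, so every vertex there is looped and the looplessness of $\Gamma$ is invisible at that level. Saying ``identify the factors and lift back using twin-freeness'' is not an argument without the dispensable-edge refinement and an analysis of which factors are exchanged. Note also that not every connected bipartite twin-free graph is a canonical double cover: none with parts of unequal size is (e.g.\ $P_5$). So your first case does not cover general $\Sigma$.
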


The second key step passes from prime powers to arbitrary factor-loopless graphs.

\begin{theorem}
\label{th:thfless}
Let $\Gamma$ be a connected non-bipartite factor-loopless graph with respect to the direct product. If $\Gamma$ is stable, then $\Gamma$ is Sym-prime.
\end{theorem}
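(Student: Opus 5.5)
We will derive Theorem~\ref{th:thfless} from Theorem~\ref{th:thpsp} by grouping the prime factors of $\Gamma$ into isomorphism classes and analysing how an automorphism of $\Gamma\times\Sigma$ acts on them. Write the direct-prime factorization in the loops-allowed category as
\[
\Gamma\cong \Pi_1^{q_1}\times\cdots\times\Pi_m^{q_m},
\]
where the $\Pi_i$ are pairwise non-isomorphic loopless primes. Since $\Gamma$ is connected and non-bipartite, each $\Pi_i$ is connected and non-bipartite. Since $\Gamma$ is twin-free, each $\Pi_i$ is twin-free. Set $\Gamma_i:=\Pi_i^{q_i}$. Stability of $\Gamma$ means $\aut(\Gamma\times K_2)=\aut(\Gamma)\times\aut(K_2)$.

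\textbf{Step 1.} Show that each $\Gamma_i$ is stable. Take any $\tau\in\aut(\Gamma_i\times K_2)$ and extend it by the identity on the remaining factors $\Gamma_j$, $j\neq i$. This gives an automorphism of $\Gamma\times K_2$, which by stability of $\Gamma$ splits as a product of an automorphism of $\Gamma$ and one of $K_2$. Restricting to a fibre over a fixed vertex of $\prod_{j\ne i}\Gamma_j$, and using connectedness and non-bipartiteness to control the $K_2$-coordinate, shows that $\tau$ splits. Theorem~\ref{th:thpsp} then gives that each $\Gamma_i$ is Sym-prime.

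\textbf{Step 2.} Let $(\Gamma,\Sigma)$ be nontrivial with $\Sigma$ bipartite, and let $\varphi\in\aut(\Gamma\times\Sigma)$. Since $\Gamma$ and $\Sigma$ are coprime, twin-free and connected, and $\Gamma\times\Sigma$ is connected bipartite, I would use the standard factorization-of-automorphisms theory (\cite[Theorem~8.18]{HIK2011} and its bipartite variants in \cite{QXZZ2021,WQX2025}). The plan is to pass to $(\Gamma\times\Sigma)\times\Sigma'$, or to the non-bipartite "half" obtained by choosing a non-bipartite odd-cycle-containing partner, so that unique prime factorization applies. From this I want to conclude that $\varphi$ permutes the prime factors compatibly with isomorphism type. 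The isomorphism classes $\Pi_i$ are distinct, and coprimeness prevents mixing factors of $\Gamma$ with factors of $\Sigma$. Hence $\varphi$ should decompose as
\[
\varphi \in \aut(\Gamma_1\times\Sigma_1)\times\cdots
\]
More precisely, I aim to show that $\varphi$ respects the decomposition $\Gamma\times\Sigma\cong\Gamma_1\times(\Gamma_2\times\cdots\times\Gamma_m\times\Sigma)$ up to an automorphism of the second factor.

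\textbf{Step 3.} Induct on $m$. The base case $m=1$ is Theorem~\ref{th:thpsp}. For the inductive step, set $\Sigma^*:=\Gamma_2\times\cdots\times\Gamma_m\times\Sigma$, which is connected, bipartite and twin-free. The graphs $\Gamma_1$ and $\Sigma^*$ are coprime because $\Pi_1$ does not occur among the other factors and $\Gamma,\Sigma$ are coprime. Sym-primality of $\Gamma_1$ then gives $\aut(\Gamma_1\times\Sigma^*)=\aut(\Gamma_1)\times\aut(\Sigma^*)$. So $\varphi=\alpha\times\beta$ with $\beta\in\aut(\Gamma'\times\Sigma)$, where $\Gamma'=\Gamma_2\times\cdots\times\Gamma_m$. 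The graph $\Gamma'$ is stable by the argument of Step 1, and it is factor-loopless and coprime to $\Sigma$. The induction hypothesis therefore gives $\beta\in\aut(\Gamma')\times\aut(\Sigma)$. Hence $\varphi\in\aut(\Gamma_1)\times\aut(\Gamma')\times\aut(\Sigma)\le\aut(\Gamma)\times\aut(\Sigma)$.

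The main obstacle is the coprimality claim in Step 3, and with it the fact that Step 2 is largely subsumed by Step 3. We need $\Gamma_1$ coprime to $\Sigma^*$ even though $\Sigma^*$ is bipartite: a common factor $\Delta$ could a priori have a nonunique factorization, because bipartite graphs lack unique prime factorization in the direct product. I would first try to handle this by noting that any common factor $\Delta$ of $\Gamma_1$ is non-bipartite with a loopless prime factorization built from $\Pi_1$. Then I would use cancellation: $\Gamma_1\cong\Delta\times\Gamma_1'$ forces $\Delta\cong\Pi_1^{r}$ by unique factorization of the non-bipartite connected graph $\Gamma_1$. After that, I would compare with $\Sigma^*\times K_1^{\circ}$ (adding a looped vertex) or with $\Sigma^*\times C_3$, which is non-bipartite connected and has unique factorization. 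This should show that $\Pi_1$ divides $\Gamma'$ or $\Sigma$, contradicting the choice of $\Pi_1$ or the coprimality of $\Gamma$ and $\Sigma$.
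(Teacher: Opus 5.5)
Your overall structure matches the paper's. Step~1 is Lemma~\ref{le:lespisf}, and your fibre-restriction argument works. The base case is Theorem~\ref{th:thpsp}, and Step~3 is Lemma~\ref{le:lecspsp} unrolled into an induction on the number of prime classes. Step~2 is not needed and can be dropped. You even economise slightly: you only use the trivial inclusion $\aut(\Gamma_1)\times\aut(\Gamma')\le\aut(\Gamma)$, whereas the paper invokes equality via \cite[Theorem~8.18]{HIK2011}.

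The genuine gap is the one you identify yourself. To apply Sym-primality of $\Gamma_1$, you need $(\Gamma_1,\Sigma^*)$ to be a nontrivial pair, in particular $\Gamma_1$ coprime to $\Sigma^*=\Gamma'\times\Sigma$. The paper's Lemma~\ref{le:lecspsp} makes exactly this move without verifying coprimality (``since $\Gamma_1$ is Sym-prime and $\Gamma_2\times\Sigma$ is bipartite, the pair $(\Gamma_1,\Gamma_2\times\Sigma)$ is stable''), so there is no argument there to borrow.

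Your proposed repair fails. $K_1^{\circ}$ is the identity for $\times$, so $\Sigma^*\times K_1^{\circ}\cong\Sigma^*$. And $\Sigma^*\times C_3$ is bipartite, since any direct product with a bipartite factor is bipartite. Neither graph therefore lies in the connected non-bipartite class where unique factorization is available.

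Worse, the implication you are aiming for, ``a non-bipartite prime dividing $\Gamma'\times\Sigma$ divides $\Gamma'$ or $\Sigma$'', is false at the level of generality your sketch uses. Let $L$ be $P_3$ with loops at both end vertices. Then $K_3\times K_2\cong C_6\cong L\times K_2$. So the loopless prime $K_3$ divides $L\times K_2$ but divides neither $L$ nor $K_2$, even though $K_3\times L$ and $K_2$ are coprime. More generally, any two non-isomorphic TF-isomorphic graphs $A,A'$ satisfy $A\times K_2\cong A'\times K_2$.

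So a correct proof of the coprimality must genuinely use the looplessness of the prime factors of $\Gamma'$ and/or the stability of $\Gamma$. For the latter, note that a TF-isomorphism $(\alpha,\beta)$ from $A$ to $A'$ with $\alpha\neq\beta$ yields the non-diagonal TF-automorphism
\[
\bigl((a,a')\mapsto({a'}^{\alpha^{-1}},a^{\alpha}),\ (a,a')\mapsto({a'}^{\beta^{-1}},a^{\beta})\bigr)
\]
of $A\times A'$. In particular $K_3\times L$ above is unstable. As it stands, this coprimality step is unproved, both in your proposal and in the paper.
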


Theorem \ref{th:thgpsg} follows immediately from Theorem \ref{th:thfless}. Indeed, if $\Gamma$ is factor-loopless and stable, then $\Gamma$ is Sym-prime by Theorem \ref{th:thfless}, hence every nontrivial graph pair $(\Gamma,\Sigma)$ with $\Sigma$ bipartite is stable.

\subsection{Organization of the paper}
The paper is organized as follows. In Section \ref{sec:secp}, we recall the necessary preliminaries on two-fold automorphisms, Cartesian products and Cartesian skeletons, and unexpected automorphisms.
We will prove Theorem~\ref{th:thpsp} in Section~\ref{sec:secpsp}. In Section~\ref{sec:secthppsp}, we first prove Theorem~\ref{th:thfless}, which then yields
Theorem~\ref{th:thgpsg}.
In Section \ref{sec:seca}, we answer the questions proposed by Gan, Liu and Xia in \cite{GLX2025}.

\section{Preliminaries}
\label{sec:secp}

In this section, we recall and introduce some definitions and results concerning TF-automorphisms, Cartesian products of graphs and unexpected automorphisms. This notation and these results will be used in the proof of the main result of this paper.

\subsection{TF-automorphisms}
Let $\Gamma$ and $\Sigma$ be graphs, and let $\alpha$ and $\beta$ be bijections from $V(\Gamma)$ to $V(\Sigma)$. The pair $(\alpha, \beta)$ is called a {\em two-fold isomorphism} \cite{LM2011} (or, {\em TF-isomorphism}) from $\Gamma$ to $\Sigma$ if $(u, v)\in A(\Gamma)$ if and only if $(u^\alpha,v^\beta)\in A(\Sigma)$.
In particular, if $\Sigma=\Gamma$, then we call $(\alpha,\beta)$ a {\em two-fold automorphism} (or, {\em TF-automorphism}) of $\Gamma$.
The set of TF-automorphisms of $\Gamma$ forms a group under the product
\[
(\alpha_1,\beta_1)(\alpha_2,\beta_2):=(\alpha_1\alpha_2,\beta_1\beta_2),
\]
and we denote this group by $\TFA(\Gamma)$. A TF-automorphism $(\alpha,\beta)$ is \emph{diagonal} if $\alpha=\beta$.

A diagonal TF-automorphism $(\alpha,\alpha)$ corresponds uniquely to an automorphism $\alpha$, which gives $\aut(\Gamma)\leq\TFA(\Gamma)$ for each graph $\Gamma$. Moreover, the equality holds if and only if every TF-automorphism of $\Gamma$ is diagonal.
Lauri, Mizzi and Scapellato \cite{LMS2015} gave a useful criterion for a connected non-bipartite graph to be unstable. We restate their result as follows.

\begin{lemma}~\cite[Theorem~3.2]{LMS2015}\label{lem:TF}
Let $\Gamma$ be a connected and non-bipartite graph. Then $\aut(\Gamma\times K_2)=\TFA(\Gamma)\rtimes \mathbb{Z}_2$. In particular, $\Gamma$ is unstable if and only if it admits a non-diagonal TF-automorphism.
\end{lemma}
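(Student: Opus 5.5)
The plan is to realise $\TFA(\Gamma)$ concretely inside $\aut(\Gamma\times K_2)$ as the stabiliser of the bipartition, and then read off the splitting and the stability criterion.

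Write $V(K_2)=\{0,1\}$, so $\Gamma\times K_2$ has vertices $(u,i)$ with $u\in V(\Gamma)$, $i\in\{0,1\}$, and $(u,0)\sim(v,1)$ if and only if $\{u,v\}\in E(\Gamma)$. Since $\Gamma$ is connected and non-bipartite, $\Gamma\times K_2$ is connected: a walk of odd length from $u$ to itself lets one reach $(u,1)$ from $(u,0)$. It is also bipartite with parts $V_0=V(\Gamma)\times\{0\}$ and $V_1=V(\Gamma)\times\{1\}$. A connected bipartite graph has a unique bipartition, so every automorphism either fixes both $V_0$ and $V_1$ setwise or swaps them. Let $H\le\aut(\Gamma\times K_2)$ be the subgroup fixing each part. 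The map $\tau:(u,i)\mapsto(u,1-i)$ is an automorphism of order $2$ that swaps the parts. Hence $H$ has index exactly $2$, so it is normal, $H\cap\langle\tau\rangle=1$, and $\aut(\Gamma\times K_2)=H\rtimes\langle\tau\rangle$.

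Next I would show $H\cong\TFA(\Gamma)$. Given $(\alpha,\beta)\in\TFA(\Gamma)$, define $\phi_{(\alpha,\beta)}:(u,0)\mapsto(u^\alpha,0)$ and $(u,1)\mapsto(u^\beta,1)$. This is a bijection, and
\[
(u,0)\sim(v,1)\iff (u,v)\in A(\Gamma)\iff (u^\alpha,v^\beta)\in A(\Gamma)\iff (u^\alpha,0)\sim(v^\beta,1).
\]
So $\phi_{(\alpha,\beta)}\in H$, and $(\alpha,\beta)\mapsto\phi_{(\alpha,\beta)}$ is clearly an injective homomorphism. Conversely, any $\sigma\in H$ restricts to bijections of $V_0$ and $V_1$, which define $\alpha,\beta$ on $V(\Gamma)$. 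The same equivalence, read backwards, shows that $(\alpha,\beta)\in\TFA(\Gamma)$, so the map is onto $H$. This gives $\aut(\Gamma\times K_2)=\TFA(\Gamma)\rtimes\mathbb{Z}_2$, where $\tau$ acts by conjugation as $(\alpha,\beta)\mapsto(\beta,\alpha)$.

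For the ``in particular'' part, I would note that under this identification the natural subgroup $\aut(\Gamma)\times\aut(K_2)$ is exactly $\{\phi_{(\alpha,\alpha)}\}\rtimes\langle\tau\rangle$, that is, the diagonal TF-automorphisms extended by $\tau$. Equality in \eqref{eq:direct} with $\Sigma=K_2$ therefore holds if and only if $\TFA(\Gamma)$ consists only of diagonal elements. Equivalently, $\Gamma$ is unstable exactly when a non-diagonal TF-automorphism exists.

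The only step needing care is the uniqueness of the bipartition. This is where connectedness of $\Gamma\times K_2$, and hence non-bipartiteness of $\Gamma$, is genuinely used; everything else is a direct verification.
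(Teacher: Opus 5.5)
Your proof is correct. The paper gives no proof of its own and simply cites \cite[Theorem~3.2]{LMS2015}; your argument is the standard proof of that result: $\TFA(\Gamma)$ is identified with the subgroup of $\aut(\Gamma\times K_2)$ preserving the unique bipartition of the connected bipartite graph $\Gamma\times K_2$, and the swap $\tau$ is a complement. Your verification also goes through unchanged when $\Gamma$ has loops, which matters because the paper later applies this lemma to the looped graph $\Delta$.
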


\subsection{Cartesian product of graphs}

The \emph{Cartesian product} \cite{HIK2011} $\Gamma\Box\Sigma$ of graphs $\Gamma$ and $\Sigma$ is the graph with vertex set $V(\Gamma)\times V(\Sigma)$ in which $(u,i)$ is adjacent to $(v,j)$ if and only if either $u=v$ and $i$ is adjacent to $j$ in $\Sigma$, or $i=j$ and $u$ is adjacent to $v$ in $\Gamma$.
An expression $$\Gamma\cong\Gamma_1\Box\cdots\Box\Gamma_n$$ with each $\Gamma_i$ Cartesian-prime is called a \emph{Cartesian-prime factorization} of $\Gamma$.
Similar to the direct product of graphs, two graphs $\Gamma$ and $\Sigma$ are \emph{Cartesian-coprime} if there is no graph $\Delta$ of order greater than $1$ such that $\Gamma\cong\Gamma'\Box\Delta$ and $\Sigma\cong\Sigma'\Box\Delta$ for some graphs $\Gamma'$ and $\Sigma'$. They are \emph{Cartesian-quasicoprime} if for every common Cartesian-prime factor $\Delta$ of $\Gamma$ and $\Sigma$, the Cartesian-prime factorization of neither $\Gamma$ nor $\Sigma$ contains at least two factors isomorphic to $\Delta$.

We shall use the following standard facts. They follow from ~\cite[Theorems~6.6,~6.13,~6.21]{HIK2011} together with~\cite[Corollary~6.11]{HIK2011}.

\begin{lemma}\label{thm:cartesian}
The following statements hold:
\begin{enumerate}[\rm(a)]
\item\label{thm:cartesianc} If $\Gamma$ and $\Sigma$ are both connected graphs, then $\Gamma\Box\Sigma$ is connected.
\item\label{thm:cartesiand} If $\Gamma$, $\Sigma$ and $\Delta$ are graphs such that $\Gamma\Box\Delta\cong\Sigma\Box\Delta$ and $\Delta$ is nonempty, then $\Gamma\cong\Sigma$.
\item\label{thm:cartesiana} Every connected graph has a unique Cartesian-prime factorization up to isomorphism and the order of the factors.
\item\label{thm:cartesianb} The automorphism group of a Cartesian product of connected Cartesian-prime graphs is the direct product of the wreath products on the sets of pairwise isomorphic Cartesian-prime graphs.
\end{enumerate}
\end{lemma}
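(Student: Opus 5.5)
These four facts are standard, so the plan is to reduce each one to the cited results in \cite{HIK2011}, adding a short direct argument where one is easy.

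For (a), I argue directly. Let $(u,i)$ and $(v,j)$ be vertices of $\Gamma\Box\Sigma$. Take a $u$--$v$ path $u=u_0,u_1,\dots,u_m=v$ in $\Gamma$ and an $i$--$j$ path $i=i_0,\dots,i_\ell=j$ in $\Sigma$. Then
\[
(u_0,i),(u_1,i),\dots,(u_m,i),(v,i_1),\dots,(v,i_\ell)
\]
is a walk in $\Gamma\Box\Sigma$ from $(u,i)$ to $(v,j)$: consecutive vertices differ in exactly one coordinate, and there they are adjacent.

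For (c), I invoke the Sabidussi--Vizing unique factorization theorem \cite[Theorem~6.6]{HIK2011}. If one wanted to reprove it, the outline would be as follows.
\begin{itemize}
\item Take the convex closure $\Theta^*$ of the Djokovi\'c--Winkler relation on the edges of a connected graph.
\item Show that the product relation $\sigma$ (the finest relation whose classes determine a Cartesian factorization) is the transitive closure of $\Theta$ together with the ``square property'' relation.
\item Show that any Cartesian factorization induces a partition of $E(\Gamma)$ coarser than $\sigma$.
\end{itemize}
Uniqueness then follows because the prime factors correspond exactly to the $\sigma$-classes. This is the technical heart, and I would not redo it here.

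For (b), the connected case follows at once from (c), by comparing multisets of prime factors once $\Delta$ is cancelled. The general (possibly disconnected) case needs more work, and I take it from \cite[Corollary~6.11]{HIK2011}. That argument uses the semiring of isomorphism classes of graphs under $+$ and $\Box$, and the fact that this semiring embeds into a polynomial ring in which nonzero elements are cancellable. I expect (b) for disconnected graphs to be the only genuinely nontrivial point. Since the paper uses these facts only for connected factors, it suffices to cite it.

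For (d), I use \cite[Theorems~6.13,~6.21]{HIK2011}, due to Imrich and Miller. Any automorphism of a connected graph permutes the $\sigma$-classes, so it maps each prime-factor layer onto a layer of an isomorphic factor. Hence it has the form $(x_1,\dots,x_n)\mapsto(\varphi_1(x_{\pi(1)}),\dots,\varphi_n(x_{\pi(n)}))$, where $\pi$ permutes indices among isomorphic factors and each $\varphi_k$ is an isomorphism between the corresponding factors. Grouping the isomorphic factors gives the stated direct product of wreath products $\aut(\Delta)\wr \sym(m)$.
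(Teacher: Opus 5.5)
Your proposal is correct and takes the same route as the paper, which gives no argument beyond citing \cite[Theorems~6.6, 6.13, 6.21 and Corollary~6.11]{HIK2011}. Your explicit walk for (a), the reduction of connected cancellation in (b) to unique factorization (with the polynomial-semiring embedding for arbitrary graphs), and the Imrich--Miller coordinate form of automorphisms in (d) are all sound additions; the outline for (c) is not load-bearing since you cite the Sabidussi--Vizing theorem (and $\Theta^*$ is the transitive closure of $\Theta$, not a convex closure), and your matching of theorem numbers to parts is a guess the paper does not confirm.
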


We now recall the definition of Cartesian skeleton of a graph.

\begin{definition}\label{def:skeleton}
For a graph $\Gamma$, the {\em Boolean square} of $\Gamma$, denoted by $B(\Gamma)$, is the graph with vertex set $V(\Gamma)$ and edge set
\[
\{\{u,v\}:u, v\in V(\Gamma),\,u\neq v,\,\,N_{\Gamma}(u)\cap N_{\Gamma}(v)\neq\varnothing\}.
\]
An edge $\{u,v\}$ of $B(\Gamma)$ is said to be \emph{dispensable} with respect to $\Gamma$ if there exists some $w\in V(\Gamma)$ satisfying both of the conditions:
\begin{itemize}
\item $N_\Gamma(u)\cap N_\Gamma(v)\subsetneq N_\Gamma(u)\cap N_\Gamma(w)$ or $N_\Gamma(u)\subsetneq N_\Gamma(w)\subsetneq N_\Gamma (v)$, and
\item $N_\Gamma(v)\cap N_\Gamma(u)\subsetneq N_\Gamma(v)\cap N_\Gamma(w)$ or $N_\Gamma(v)\subsetneq N_\Gamma(w)\subsetneq N_\Gamma (u)$.
\end{itemize}
The {\em Cartesian skeleton} $S(\Gamma)$ of $\Gamma$ is the graph obtained from $B(\Gamma)$ by removing all dispensable edges with respect to $\Gamma$.
\end{definition}

The following basic facts about Cartesian skeletons will be needed.

\begin{lemma}[{\cite[Propositions~8.10~and~8.13]{HIK2011}}]
\label{le:lecsb}
Let $\Gamma$ and $\Sigma$ be graphs. Then the following statements hold:
\begin{enumerate}[\rm(a)]
\item\label{le:leas} Each automorphism of $\Gamma$ is also an automorphism of $S(\Gamma)$.
\item\label{it:itcsba} If $\Gamma$ is connected and non-bipartite, then $S(\Gamma)$ is connected.
\item\label{it:itcsbb} If $\Gamma$ is connected and bipartite, then $S(\Gamma)$ has precisely two connected components, whose respective vertex sets are the two partite sets of $\Gamma$.
\item\label{le:lecsd} If $\Gamma$ and $\Sigma$ are both twin-free without isolated vertices, then $S(\Gamma\times \Sigma)=S(\Gamma)\Box S(\Sigma)$.
\end{enumerate}
\end{lemma}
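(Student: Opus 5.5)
Since each part of the lemma concerns only the combinatorics of neighbourhoods, I would work directly from Definition~\ref{def:skeleton}.

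\emph{Part (a).} Let $\sigma\in\aut(\Gamma)$. Then $N_\Gamma(u^\sigma)=N_\Gamma(u)^\sigma$ for every vertex $u$. Hence $\sigma$ preserves nonemptiness of intersections of neighbourhoods, so $\sigma\in\aut(B(\Gamma))$. It also preserves all strict inclusions among intersections of neighbourhoods, so an edge $\{u,v\}$ is dispensable with witness $w$ if and only if $\{u^\sigma,v^\sigma\}$ is dispensable with witness $w^\sigma$. Consequently $\sigma$ maps the edge set of $S(\Gamma)$ onto itself.

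\emph{Parts (b) and (c).} I would first analyse $B(\Gamma)$. Two vertices are adjacent in $B(\Gamma)$ exactly when they are distinct and joined by a walk of length $2$ in $\Gamma$. Therefore the components of $B(\Gamma)$ are the classes of the relation ``joined by a walk of even length''. For connected $\Gamma$ there are two cases.
\begin{itemize}
\item If $\Gamma$ is non-bipartite, there is one such class, since an odd cycle lets one adjust parity.
\item If $\Gamma$ is bipartite, there are exactly two classes, namely the partite sets.
\end{itemize}
It then remains to show that deleting the dispensable edges does not disconnect any component. The plan is to prove that the endpoints of every dispensable edge are still joined by a path in $S(\Gamma)$, by well-founded induction on edges of $B(\Gamma)$. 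Order the edges $\{u,v\}$ by the size of $N(u)\cap N(v)$ and, secondarily, by the nesting of $N(u)$ and $N(v)$.

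Suppose $\{u,v\}$ is dispensable with witness $w$. In the first alternative of each condition, $\{u,w\}$ and $\{v,w\}$ are edges of $B(\Gamma)$ with strictly larger common neighbourhoods than $N(u)\cap N(v)$. In the nested alternative $N(u)\subsetneq N(w)\subsetneq N(v)$, we still get $B$-edges $uw$ and $wv$, and each has a strictly shorter nesting chain between its endpoints. In every case both new edges are strictly larger in the chosen order, which is finite. By induction they are joined in $S(\Gamma)$, and hence so are $u$ and $v$.

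I expect this step to be the main obstacle. The four mixed combinations of the two alternatives must be checked so that a single potential decreases in every case. What I would try first is the lexicographic potential $\bigl(-|N(u)\cap N(v)|,\ \bigl||N(u)|-|N(v)|\bigr|\bigr)$.

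\emph{Part (d).} In $\Gamma\times\Sigma$ we have
\[
N\bigl((u,x)\bigr)=N(u)\times N(x),
\]
so intersections of neighbourhoods factor coordinatewise:
\[
N\bigl((u,x)\bigr)\cap N\bigl((v,y)\bigr)=\bigl(N(u)\cap N(v)\bigr)\times\bigl(N(x)\cap N(y)\bigr).
\]
Because there are no isolated vertices, $N(u)\cap N(u)=N(u)\neq\varnothing$. Hence the edges of $B(\Gamma\times\Sigma)$ are the pairs whose coordinates are, in each factor, equal or $B$-adjacent, with not both coordinates equal. I would then verify two claims.
\begin{itemize}
\item An edge $\{(u,x),(v,y)\}$ with $u\neq v$ and $x\neq y$ is dispensable, using the witness $(u,y)$. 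Twin-freeness gives $N(u)\neq N(v)$ and $N(x)\neq N(y)$, so the intersection $\bigl(N(u)\cap N(v)\bigr)\times\bigl(N(x)\cap N(y)\bigr)$ is strictly contained in $\bigl(N(u)\cap N(v)\bigr)\times N(y)$.
\item An edge $\{(u,x),(v,x)\}$ is dispensable if and only if $\{u,v\}$ is dispensable in $\Gamma$. Witnesses transfer as $w\mapsto(w,x)$. Conversely, any product witness $(w,z)$ projects to a witness $w$ in $\Gamma$, because products of nonempty sets compare coordinatewise.
\end{itemize}
These two claims give $S(\Gamma\times\Sigma)=S(\Gamma)\Box S(\Sigma)$.
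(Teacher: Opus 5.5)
Your arguments for (a)--(c) are sound, and the lexicographic potential you propose does close the induction. A nested option in (1) or (2) forces $N(u)\subsetneq N(w)\subsetneq N(v)$ (or the reverse), since the other condition is then implied by the nesting. In that case $\{u,w\}$ keeps the common neighbourhood $N(u)=N(u)\cap N(v)$ with a smaller size gap, while $\{w,v\}$ has the strictly larger common neighbourhood $N(w)$. This is essentially the extremal-counterexample argument behind the result the paper cites from \cite{HIK2011}; the paper itself gives no proof.

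The gap is in the first bullet of (d). Twin-freeness only gives $N(u)\neq N(v)$ and $N(x)\neq N(y)$. But $N(x)\neq N(y)$ does not imply $N(x)\cap N(y)\subsetneq N(y)$: that inclusion fails exactly when $N(y)\subsetneq N(x)$. This does happen for $B$-adjacent vertices of twin-free graphs, for instance an end vertex and the third vertex of $P_4$. You also never verify condition (1) for $(u,y)$, which needs $N(u)\not\subseteq N(v)$ or a nesting.

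The fixed witness genuinely fails. Take $P_4\times P_4$ with path $a\,b\,c\,d$ in each factor, and the $B$-edge $\{(a,c),(c,a)\}$, so $u=a$, $v=c$, $x=c$, $y=a$. For $(u,y)=(a,a)$, all three pairwise intersections of $N((a,c))$, $N((c,a))$, $N((a,a))$ equal $\{(b,b)\}$, and no nesting holds, so neither condition is met. The edge is dispensable via $(v,x)=(c,c)$ instead.

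The repair is a short case split. Swapping the two endpoints exchanges $u\leftrightarrow v$ and $x\leftrightarrow y$ together, so you may assume $N(u)\not\subseteq N(v)$.
\begin{itemize}
\item If $N(y)\not\subseteq N(x)$, then $(u,y)$ satisfies the first alternative of both conditions.
\item If $N(y)\subsetneq N(x)$ and $N(v)\not\subseteq N(u)$, then $(v,x)$ satisfies the first alternative of both conditions.
\item If $N(y)\subsetneq N(x)$ and $N(v)\subsetneq N(u)$, then $(u,y)$ satisfies the first alternative of (1) and the nested alternative $N(v)\times N(y)\subsetneq N(u)\times N(y)\subsetneq N(u)\times N(x)$ of (2).
\end{itemize}
Your second bullet, transferring witnesses along $w\mapsto(w,x)$ and projecting them back, is correct as written.
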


\subsection{Unexpected automorphisms} Let $\Gamma$ and $\Sigma$ be graphs. The $\Gamma$-partition and $\Sigma$-partition of $V(\Gamma\times\Sigma)$ are the partitions $\{\{u\}\times V(\Sigma): u\in V(\Gamma)\}$ and $\{V(\Gamma)\times \{x\}: x\in V(\Sigma)\}$, respectively.

Gan, Liu and Xia in \cite{GLX2025} introduced the following definition to describe the action of unexpected automorphisms.

\begin{definition}\cite[Definition~2.2]{GLX2025}
\label{de:degmsm}
Let $\Gamma$ and $\Sigma$ be graphs. An automorphism of $\Gamma\times\Sigma$ is called a {\em $\Gamma$-mixer} ({\em $\Sigma$-mixer}) if it breaks the $\Gamma$-partition ($\Sigma$-partition) of $V(\Gamma\times\Sigma)$.
\end{definition}

Wang, Qin and Xia in~\cite{WQX2025} proved that for any nontrivial graph pair $(\Gamma,\Sigma)$, where $\Gamma$ is non-bipartite and $\Sigma$ is bipartite with bipartition $\{U,W\}$, there exists an automorphism of $\Gamma\times\Sigma$, which stabilizes $V(\Gamma)\times U$.

\begin{lemma}\cite[Lemma~2.4]{WQX2025}
\label{claim1}
Let $\Gamma$ and $\Sigma$ be connected twin-free graphs such that $\Gamma$ is non-bipartite and $\Sigma$ is bipartite with bipartition $\{U,W\}$, and let $\tau$ be an automorphism of $S(\Gamma)\Box S(\Sigma)$. Then there exists $\sigma\in\aut(S(\Sigma))$ with $|\sigma|\leq2$ such that $(\mathrm{id},\sigma)\tau$ and $\tau(\mathrm{id},\sigma)$ stabilize $V(\Gamma)\times U$, where $\id$ is the identity permutation on $V(\Gamma)$, and if $|\sigma|=2$, then $\sigma$ interchanges $U$ and $W$.
\end{lemma}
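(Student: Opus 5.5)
The plan is to argue via connected components of $S(\Gamma)\Box S(\Sigma)$. By Lemma~\ref{le:lecsb}(\ref{it:itcsba}), $S(\Gamma)$ is connected, since $\Gamma$ is connected and non-bipartite. By Lemma~\ref{le:lecsb}(\ref{it:itcsbb}), $S(\Sigma)$ has exactly two connected components, $S_U$ and $S_W$, with vertex sets $U$ and $W$. The Cartesian product distributes over disjoint unions, and Lemma~\ref{thm:cartesian}(\ref{thm:cartesianc}) shows that each of $S(\Gamma)\Box S_U$ and $S(\Gamma)\Box S_W$ is connected. Hence $S(\Gamma)\Box S(\Sigma)$ has exactly two components, with vertex sets $V(\Gamma)\times U$ and $V(\Gamma)\times W$.

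Any automorphism permutes connected components, so $\tau$ either fixes the set $V(\Gamma)\times U$ or interchanges it with $V(\Gamma)\times W$. In the first case I take $\sigma=\id$, and there is nothing more to prove.

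In the second case, $\tau$ restricts to an isomorphism $S(\Gamma)\Box S_U\cong S(\Gamma)\Box S_W$. Since $S(\Gamma)$ is nonempty, the cancellation law in Lemma~\ref{thm:cartesian}(\ref{thm:cartesiand}) gives $S_U\cong S_W$. I fix an isomorphism $\varphi\colon S_U\to S_W$ and define $\sigma$ on $V(\Sigma)$ by $\sigma|_U=\varphi$ and $\sigma|_W=\varphi^{-1}$. Then $\sigma$ is an automorphism of $S(\Sigma)=S_U\cup S_W$, it satisfies $\sigma^2=\id$, and it interchanges $U$ and $W$. Consequently $(\id,\sigma)$ is an automorphism of $S(\Gamma)\Box S(\Sigma)$ that also swaps the two components.

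Each of the composites $(\id,\sigma)\tau$ and $\tau(\id,\sigma)$ therefore swaps the two components twice, so each maps $V(\Gamma)\times U$ to itself, whatever the order of composition. In this case $|\sigma|=2$, as required. The only step carrying real content is the cancellation, which produces the swapping involution $\sigma$; the rest is bookkeeping with components.
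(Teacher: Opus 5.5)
Your proof is correct and complete. The two components $V(\Gamma)\times U$ and $V(\Gamma)\times W$ of $S(\Gamma)\Box S(\Sigma)$, the cancellation $S^{+}(\Sigma)\cong S^{-}(\Sigma)$ when $\tau$ swaps them, and the involution $\sigma$ built from $\varphi$ and $\varphi^{-1}$ give exactly the claimed conclusion, for both orders of composition.

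The paper gives no proof of its own here; it imports the statement from \cite[Lemma~2.4]{WQX2025}. Your argument is a natural self-contained derivation using only Lemma~\ref{thm:cartesian}(a),(b) and Lemma~\ref{le:lecsb}(b),(c). Note that it never uses twin-freeness, which is harmless.
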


Using Lemma \ref{claim1}, the authors of \cite{WQX2025} proved that for any nontrivial graph pair $(\Gamma,\Sigma)$ with $\Gamma$ non-bipartite and $\Sigma$ bipartite, a $\Sigma$-mixer of $\Gamma\times\Sigma$ is also a $\Gamma$-mixer.

\begin{lemma}\cite[Lemma~4.4]{WQX2025}
\label{le:lestg}
Let $\Gamma$ and $\Sigma$ be connected graphs such that $\Gamma$ is non-bipartite and $\Sigma$ is bipartite with bipartition $\{U,W\}$, and let $\rho$ be an automorphism of $S(\Gamma)\Box S(\Sigma)$ that stabilizes $V(\Gamma)\times U$ and is a $\Sigma$-mixer of $V(\Gamma)\times V(\Sigma)$. Then $\rho$ is a $\Gamma$-mixer of $V(\Gamma)\times V(\Sigma)$.
\end{lemma}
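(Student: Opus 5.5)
I will prove the contrapositive: if $\rho$ stabilizes $V(\Gamma)\times U$ and is not a $\Gamma$-mixer, then $\rho$ is not a $\Sigma$-mixer. The idea is to work one connected component at a time, where the unique Cartesian-prime factorization controls all automorphisms.

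\textbf{Setup.} By Lemma~\ref{le:lecsb}(\ref{it:itcsba}), $S(\Gamma)$ is connected. By Lemma~\ref{le:lecsb}(\ref{it:itcsbb}), $S(\Sigma)$ has exactly two components $S_U$ and $S_W$, with vertex sets $U$ and $W$. (Here I use that the graphs are twin-free, as in the intended application.) By Lemma~\ref{thm:cartesian}(\ref{thm:cartesianc}), $S(\Gamma)\Box S(\Sigma)$ then has exactly two components, $X_U=S(\Gamma)\Box S_U$ on $V(\Gamma)\times U$ and $X_W=S(\Gamma)\Box S_W$ on $V(\Gamma)\times W$. Since $\rho$ stabilizes $V(\Gamma)\times U$, it restricts to automorphisms $\rho_U\in\aut(X_U)$ and $\rho_W\in\aut(X_W)$.

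\textbf{The partitions on each component.} Inside $X_U$, a block $\{u\}\times V(\Sigma)$ of the $\Gamma$-partition meets $V(\Gamma)\times U$ in $\{u\}\times U$, which is an $S_U$-layer. A block $V(\Gamma)\times\{x\}$ of the $\Sigma$-partition with $x\in U$ is an $S(\Gamma)$-layer of $X_U$. The same holds in $X_W$. So, because $\rho$ preserves the two components, $\rho$ preserves the $\Gamma$-partition (respectively the $\Sigma$-partition) if and only if each of $\rho_U,\rho_W$ maps $S_U$- or $S_W$-layers to such layers (respectively $S(\Gamma)$-layers to $S(\Gamma)$-layers).

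\textbf{Key step.} Assume $\rho$ is not a $\Gamma$-mixer. Write the Cartesian-prime factorization $X_U\cong G_1\Box\cdots\Box G_m\Box H_1\Box\cdots\Box H_r$, where the $G_i$ factor $S(\Gamma)$ and the $H_j$ factor $S_U$. This exists and is unique by Lemma~\ref{thm:cartesian}(\ref{thm:cartesiana}). By Lemma~\ref{thm:cartesian}(\ref{thm:cartesianb}), $\rho_U$ is a composition of isomorphisms between isomorphic prime factors and a permutation $\pi$ of the coordinate indices. Consequently $\rho_U$ maps the layer of any subset $I$ of indices (the vertices varying only in the coordinates in $I$) onto layers of $\pi(I)$.

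The $S_U$-layers are exactly the layers of the index set $J=\{H_1,\dots,H_r\}$. By hypothesis $\rho_U$ sends $S_U$-layers onto $S_U$-layers, so $\pi(J)=J$. Since $\pi$ is a bijection, it also fixes the complementary set of $S(\Gamma)$-indices. Hence $\rho_U$ maps $S(\Gamma)$-layers onto $S(\Gamma)$-layers. The identical argument applies to $\rho_W$. Therefore $\rho$ preserves the $\Sigma$-partition, so it is not a $\Sigma$-mixer, which proves the contrapositive.

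\textbf{Main obstacle.} The delicate point is the claim that $\rho_U$ sends $I$-layers to $\pi(I)$-layers. Lemma~\ref{thm:cartesian}(\ref{thm:cartesianb}) states the structure of the automorphism group only up to wreath products among isomorphic primes, so I need to translate it explicitly into this layer statement. A further subtlety is that prime factors of $S(\Gamma)$ may be isomorphic to prime factors of $S_U$. The argument avoids this because it uses only that $\pi$ stabilizes $J$, which is forced by the layer hypothesis, not any coprimality. I would also double-check the degenerate case $|U|=1$; it cannot occur for a connected bipartite $\Sigma$ with an edge and twin-free, except in trivial situations, which can be handled directly.
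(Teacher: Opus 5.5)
The paper gives no proof of this lemma; it is imported from \cite{WQX2025}, so there is no in-paper argument to compare with. Your proof is correct. It uses the same coordinate-permutation structure that Section~\ref{sec:secp} writes out for $\rho_{\pm}$: read directly rather than contrapositively, a prime factor of $S(\Gamma)$ exchanged with a factor of $S^{\pm}(\Sigma)$ makes the $\Gamma$-coordinate of the image depend on a $\Sigma$-coordinate.

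\textbf{The ``delicate point''.} What you need is the element-wise theorem in \cite{HIK2011} underlying Lemma~\ref{thm:cartesian}(\ref{thm:cartesianb}). It says that every automorphism of a connected graph with prime factor decomposition $G_1\Box\cdots\Box G_k$ has the form $(x_1,\dots,x_k)\mapsto(\varphi_1(x_{\pi(1)}),\dots,\varphi_k(x_{\pi(k)}))$, with isomorphisms $\varphi_i\colon G_{\pi(i)}\to G_i$. Since prime factors have at least two vertices, a $K$-layer can equal a $J$-layer only if $K=J$, which gives $\pi(J)=J$.

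\textbf{Unneeded precautions.} You do not need twin-freeness, because Lemma~\ref{le:lecsb}(\ref{it:itcsba}),(\ref{it:itcsbb}) are stated for arbitrary connected graphs. Nor does the case $|U|=1$ need separate treatment: then $r=0$ and the argument goes through vacuously.

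\textbf{An overstated equivalence.} Your claimed equivalence for the $\Gamma$-partition holds in only one direction. A block $\{u\}\times V(\Sigma)$ meets both components, and its two halves must be sent to the same $u'$. For example, let $1\ne\lambda\in\aut(S(\Gamma))$ and let $\rho$ act as the identity on $V(\Gamma)\times U$ and as $(\lambda,\id)$ on $V(\Gamma)\times W$. This $\rho$ is a $\Gamma$-mixer but not a $\Sigma$-mixer, which is exactly why the lemma has no converse. Your proof uses only the valid direction: not being a $\Gamma$-mixer implies that both restrictions preserve the $S_U$- and $S_W$-layers. For the $\Sigma$-partition, whose blocks each lie inside a single component, the equivalence is genuine. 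So the proof stands as written, but that sentence should be weakened to the implication you actually use.
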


In fact, if a nontrivial graph pair $(\Gamma,\Sigma)$ admits unexpected automorphisms and $\Gamma$ is stable, then $\Gamma\times\Sigma$ admits a $\Sigma$-mixer.

\begin{lemma}\label{le:lepn}
Let $\Gamma$ and $\Sigma$ be connected graphs such that $\Gamma$ is non-bipartite and $\Sigma$ is bipartite with bipartition $\{U,W\}$.
Assume that $\Gamma$ is stable and that $\Gamma\times\Sigma$ admits an unexpected automorphism. Then there exists a $\Sigma$-mixer $\delta\in \aut(\Gamma\times\Sigma)$.
\end{lemma}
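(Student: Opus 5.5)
We argue by contradiction: suppose every automorphism of $\Gamma\times\Sigma$ preserves the $\Sigma$-partition $\{V(\Gamma)\times\{x\}: x\in V(\Sigma)\}$, and we derive that every automorphism lies in $\aut(\Gamma)\times\aut(\Sigma)$. This contradicts the existence of an unexpected automorphism.

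Let $\tau\in\aut(\Gamma\times\Sigma)$ be arbitrary. By assumption it permutes the blocks $V(\Gamma)\times\{x\}$, so there is a permutation $\beta$ of $V(\Sigma)$ and, for each $x\in V(\Sigma)$, a bijection $\alpha_x$ of $V(\Gamma)$ with $(u,x)^\tau=(u^{\alpha_x},x^\beta)$.

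First we show $\beta\in\aut(\Sigma)$. Since $\Gamma$ is connected and non-bipartite, every vertex has a neighbour, and $\Gamma$ has edges between any two of its vertex copies. Hence $x\sim y$ in $\Sigma$ if and only if there is an edge of $\Gamma\times\Sigma$ between the blocks $V(\Gamma)\times\{x\}$ and $V(\Gamma)\times\{y\}$. This property is preserved by $\tau$, so $\beta\in\aut(\Sigma)$. Replacing $\tau$ by $\tau(\id,\beta^{-1})$, we may assume $\beta=\id$.

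Next, for adjacent $x,y$ in $\Sigma$, the edges between the two blocks form exactly a copy of the arcs of $\Gamma$: $(u,x)\sim(v,y)$ if and only if $u\sim v$. Therefore $(\alpha_x,\alpha_y)$ is a TF-automorphism of $\Gamma$. Because $\Gamma$ is connected, non-bipartite and stable, Lemma~\ref{lem:TF} says every TF-automorphism of $\Gamma$ is diagonal. So $\alpha_x=\alpha_y$ whenever $x\sim y$. Since $\Sigma$ is connected, all $\alpha_x$ equal a single $\alpha$, and $(\alpha,\alpha)\in\TFA(\Gamma)$ gives $\alpha\in\aut(\Gamma)$.

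Thus $\tau=(\alpha,\id)(\id,\beta)\in\aut(\Gamma)\times\aut(\Sigma)$. Every automorphism is then expected, a contradiction, so some automorphism $\delta$ is a $\Sigma$-mixer.

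The one point to check carefully is the block-adjacency criterion used to show $\beta\in\aut(\Sigma)$. It only needs $\Gamma$ to have at least one edge, which holds since $\Gamma$ is connected and non-bipartite. With that confirmed, no real obstacle remains, and the argument needs neither Lemma~\ref{claim1} nor twin-freeness.
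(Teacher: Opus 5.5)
Your proof is correct and follows the same route as the paper's. You assume every automorphism preserves the $\Sigma$-partition, observe that the pair of fibre maps over each edge $\{x,y\}$ of $\Sigma$ is a TF-automorphism of $\Gamma$ and hence diagonal by Lemma~\ref{lem:TF}, and then propagate this equality along the connected graph $\Sigma$. The only difference is that you prove directly, via $\beta\in\aut(\Sigma)$ and the reduction to $\beta=\id$, the reduction that the paper imports from \cite[Lemma~2.6(b)]{QXZZ2021}. Note also that your final step $(\alpha,\alpha)\in\TFA(\Gamma)$ tacitly uses that $\Sigma$ has an edge, which holds because both parts $U,W$ of the bipartition are nonempty and $\Sigma$ is connected.
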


\begin{proof}
 Suppose, to the contrary, that no automorphism of $\Gamma\times\Sigma$ is a $\Sigma$-mixer. Then all automorphisms of $\Gamma\times\Sigma$ are $\Sigma$-preserving. By \cite[Lemma~2.6(b)]{QXZZ2021}, $(\Gamma,\Sigma)$
is unstable if and only if there is a non-diagonal $\Sigma$-automorphism $(\alpha_1,\alpha_2,\ldots,\alpha_m)$ of $\Gamma$, where $m=|V(\Sigma)|$.

We now show that this is impossible. Let $\{i,j\}\in E(\Sigma)$. By definition, for all $x,y\in V(\Gamma)$, $(x,y)\in A(\Gamma)$ if and only if $(x^{\alpha_i},y^{\alpha_j})\in A(\Gamma)$.
Thus $(\alpha_i,\alpha_j)$ is also a two-fold automorphism of $\Gamma$. Since $\Gamma$ is connected, non-bipartite, and stable, Lemma \ref{lem:TF} implies that every two-fold automorphism of $\Gamma$ is diagonal. Therefore
$\alpha_i=\alpha_j$
whenever $\{i,j\}\in E(\Sigma)$. Since $\Sigma$ is connected, we obtain $$\alpha_1=\alpha_2=\cdots=\alpha_m.$$
Hence every $\Sigma$-automorphism of $\Gamma$ is diagonal, a contradiction. Therefore some unexpected automorphism of $\Gamma\times\Sigma$ is a $\Sigma$-mixer.
\end{proof}

Next, we introduce some notations concerning unexpected automorphisms of nontrivial graph pairs.

Let $(\Gamma,\Sigma)$ be a nontrivial graph pair, where $\Gamma$ is non-bipartite and $\Sigma$ is bipartite with bipartition $\{U,W\}$. By Lemma \ref{le:lecsb}\eqref{it:itcsbb}, $S(\Sigma)$ has two connected components.
Denote by $S^{+}(\Sigma)$ and $S^{-}(\Sigma)$ the two connected components of $S(\Sigma)$ whose vertex sets are $U$ and $W$, respectively. Set $$X:=V(\Gamma)\times U, \qquad Y:=V(\Gamma)\times W.$$
We assume that $\delta$ is an unexpected automorphism of $\Gamma\times\Sigma$ as shown in Lemma \ref{le:lepn} when $\Gamma$ is stable.
Then $\delta$ is a $\Sigma$-mixer of $\Gamma\times\Sigma$.
 By Lemma~\ref{le:lecsb}\eqref{le:leas}, the automorphism $\delta$ of $\Gamma\times\Sigma$ is also an automorphism of $S(\Gamma\times\Sigma)$.
 Moreover, by Lemma \ref{le:lecsb}\eqref{le:lecsd}, $$S(\Gamma\times\Sigma)=S(\Gamma)\Box S(\Sigma).$$
 By Lemma \ref{claim1}
there exists $\sigma\in\aut(S(\Sigma))$ with $|\sigma|\le 2$ such that $\rho:=(\operatorname{id},\sigma)\delta$
stabilizes $X$.
 Combining this with Lemma~\ref{le:lecsb}(c), the automorphism $\rho$ also stabilizes $Y$. We call $\rho$ the {\em $S$-automorphism} induced by $\delta$.
Set $$\rho_{+}:=\rho|_{X}\in \aut(S(\Gamma)\Box S^{+}(\Sigma)), \qquad  \rho_{-}:=\rho|_{Y}\in \aut(S(\Gamma)\Box S^{-}(\Sigma)).$$ We say that $\rho$ is \emph{one-sided} if exactly one of $\rho_{+}$ and $\rho_{-}$ is a $\Sigma$-mixer and $\rho$ is \emph{balanced} if both $\rho_{+}$ and $\rho_{-}$ are $\Sigma$-mixers.
We now introduce some notation concerning Cartesian decompositions of $\Gamma$ and $\Sigma$. Using Lemma \ref{thm:cartesian} \eqref{thm:cartesianc}, \eqref{thm:cartesiana}, after reordering Cartesian-prime factors of $S(\Gamma)$ and $S(\Sigma)$, as shown in \cite{GLX2025} there are Cartesian-product decompositions
\begin{align*}
S(\Gamma)=D_0\Box D_1\Box D_2\Box D_3,
\qquad
S^{+}(\Sigma)= A_0\Box A_1\Box A,
\qquad
S^{-}(\Sigma)= B_0\Box B_2\Box B,
\end{align*}
such that the following hold:
\begin{enumerate}
\item $D_0$ is the product of the Cartesian-prime factors of $S(\Gamma)$ exchanged with factors of both $S^{+}(\Sigma)$ and $S^{-}(\Sigma)$;
\item $D_1$ is the product of the Cartesian-prime factors of $S(\Gamma)$ exchanged with factors of $S^{+}(\Sigma)$ but not with factors of $S^{-}(\Sigma)$;
\item $D_2$ is the product of the Cartesian-prime factors of $S(\Gamma)$ exchanged with factors of $S^{-}(\Sigma)$ but not with factors of $S^{+}(\Sigma)$;
\item $D_3$ is the product of the Cartesian-prime factors of $S(\Gamma)$ not exchanged with either side.
\end{enumerate}

Gan, Liu and Xia in \cite[Lemma~4.1]{GLX2025} proved that if $D_0$ is nontrivial, then $\Gamma$ and $\Sigma$ have a common factor with respect to the direct product. Since we only consider nontrivial graph pairs, $\Gamma$ and $\Sigma$ are coprime. Therefore we assume that $D_0$ is trivial in this paper. That is,
\[
S(\Gamma)=D_1\Box D_2\Box D_3\qquad  S^{+}(\Sigma)=A_1\Box A\qquad  S^{-}(\Sigma)=B_2\Box B.
\]
With respect to these decompositions, the actions of $\rho_{+}$ and $\rho_{-}$ have the forms
\[
\bigl((d_1,d_2,d_3),(a_1,a)\bigr)^{\rho_{+}}
=
\bigl((a_1,d_2,d_3)^{\eta},(d_1,a)^{\alpha}\bigr)
\]
and
\[
\bigl((d_1,d_2,d_3),(b_2,b)\bigr)^{\rho_{-}}
=
\bigl((d_1,b_2,d_3)^{\theta},(d_2,b)^{\beta}\bigr),
\]
where
\begin{align*}
\eta:& A_1\Box D_2\Box D_3\to S(\Gamma),\\
\alpha:& D_1\Box A\to S^{+}(\Sigma),\\
\theta:& D_1\Box B_2\Box D_3\to S(\Gamma),\\
\beta:& D_2\Box B\to S^{-}(\Sigma)
\end{align*}
are graph isomorphisms.

\section{The proof of Theorem \ref{th:thpsp}}\label{sec:secpsp}

Let $\Gamma$ be a non-bipartite loopless prime graph. Set $\Delta:=\Gamma^q$ for some $q\geq 1$. If $\Delta$ is not Sym-prime, then there exists a bipartite graph $\Sigma$ such that the nontrivial graph pair $(\Delta,\Sigma)$ is unstable. Let $\delta$ be an unexpected automorphism of $\Delta\times\Sigma$ and $\rho$ be the $S$-automorphism induced by $\delta$. Then $\rho$ is either one-sided or balanced. We will therefore consider these two cases separately to prove Theorem \ref{th:thpsp}.

\subsection{One-sided}
We first handle the case that $\rho$ is one-sided.
Without loss of generality, suppose that $\rho_{+}$ is a $U$-mixer and $\rho_{-}$ is side-preserving. The other case can be treated similarly. The following result is well known in the literature. We give its proof for  completeness.

\begin{lemma}\label{le:leosd}
Let $(\Gamma, \Sigma)$ be a nontrivial graph pair, where $\Sigma$ is bipartite with bipartition $\{U,W\}$, and let $\delta$ be an unexpected automorphism of $\Gamma\times\Sigma$ and $\rho$ be the $S$-automorphism induced by $\delta$ such that $\rho_+$ is $U$-mixer and $\rho_-$ is side-preserving.
Then, after reordering Cartesian-prime factors, the following statements hold:
\begin{enumerate}
\item\label{it:itleosd1} there exist decompositions $S(\Gamma)=M\Box R$ and $S^{+}(\Sigma)=N\Box T$, where $M$ and $N$ are nontrivial connected graphs with $M\cong N$, such that
\[((m,r),(n,t))^{\rho_{+}}=\bigl((n^\mu,r^\alpha),(m^\nu,t^\beta)\bigr)\]
for some isomorphisms $\mu:N\to M$ and $\nu:M\to N$, and automorphisms $\alpha\in\aut(R)$ and $\beta\in\aut(T)$.
\item\label{it:itleosd2} there exist $ \lambda\in\aut(S(\Gamma))$ and $\eta\in\aut(S^{-}(\Sigma))$ such that
$\rho_{-}=(\lambda,\eta)$.
\end{enumerate}
\end{lemma}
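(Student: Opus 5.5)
We obtain both statements by specializing the general decomposition at the end of Section~\ref{sec:secp}, using only Lemma~\ref{thm:cartesian}\eqref{thm:cartesianb} and the definitions of $D_1,D_2,D_3$, $A_1,A$, $B_2,B$. Recall that, since $(\Gamma,\Sigma)$ is nontrivial, $D_0$ is trivial, so $S(\Gamma)=D_1\Box D_2\Box D_3$, $S^{+}(\Sigma)=A_1\Box A$ and $S^{-}(\Sigma)=B_2\Box B$.

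\textbf{Step 1: $D_2$ is trivial.} By assumption $\rho_-$ is side-preserving, i.e.\ not a $\Sigma$-mixer on $Y$. We claim that no Cartesian-prime factor of $S(\Gamma)$ is exchanged with a factor of $S^{-}(\Sigma)$. By Lemma~\ref{thm:cartesian}\eqref{thm:cartesianb}, $\rho_-$ permutes the Cartesian-prime factors of $S(\Gamma)\Box S^{-}(\Sigma)$, composed with isomorphisms between them. If some factor of $S(\Gamma)$ were sent onto a factor of $S^{-}(\Sigma)$, then two vertices lying in the same $V(\Gamma)\times\{w\}$ and differing only in that coordinate would be mapped to vertices with different $W$-coordinates. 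This would break the $\Sigma$-partition on $Y$, a contradiction. Hence $D_2$ and $B_2$ are trivial. Consequently $\theta$ is an automorphism of $D_1\Box D_3=S(\Gamma)$ and $\beta\in\aut(B)=\aut(S^{-}(\Sigma))$. So $\rho_-=(\theta,\beta)$ acts componentwise, which is statement~\eqref{it:itleosd2} with $\lambda=\theta$ and $\eta=\beta$.

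\textbf{Step 2: structure of $\rho_+$.} Since $D_2$ is trivial, $S(\Gamma)=D_1\Box D_3$, and $\rho_+$ sends $((d_1,d_3),(a_1,a))$ to $((a_1,d_3)^{\eta},(d_1,a)^{\alpha})$. Set $M:=D_1$, $R:=D_3$, $N:=A_1$ and $T:=A$. Because $\rho_+$ is a $\Sigma$-mixer (a $U$-mixer), some factor is genuinely exchanged, so $D_1$ is nontrivial, and $A_1\cong D_1$ by counting Cartesian-prime factors (Lemma~\ref{thm:cartesian}\eqref{thm:cartesiand},\eqref{thm:cartesiana}). Connectedness of $M$ and $N$ follows because they are products of Cartesian-prime factors of connected graphs.

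\textbf{Step 3: making $\eta$ and $\alpha$ split.} The isomorphisms $\eta$ and $\alpha$ need not split along $A_1\Box D_3\to D_1\Box D_3$ and $D_1\Box A\to A_1\Box A$. This is the main technical point. The approach is to use the definition of $D_1$ (factors of $S(\Gamma)$ exchanged with $S^+(\Sigma)$) and $A_1$ (their partners) as exactly the set of exchanged factors. Then, by Lemma~\ref{thm:cartesianb}, $\rho_+$ maps the factors of $A_1$ bijectively onto those of $D_1$, the factors of $D_3$ onto factors of $D_3$, the factors of $D_1$ onto those of $A_1$, and the factors of $A$ onto those of $A$.

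A subtlety arises when a factor of $D_3$ is isomorphic to a factor of $D_1$ and the permutation mixes them, since $\eta$ might send an $A_1$-factor onto a $D_3$-factor. We handle this by reordering: redefine $M$ as the product of the factors of $S(\Gamma)$ that are images of factors of $S^{+}(\Sigma)$ under $\rho_+$, and redefine $N$ as the product of the factors of $S^{+}(\Sigma)$ that are images of factors of $S(\Gamma)$; these are exactly the exchanged factors. Then $R$ and $T$, the remaining factors, are mapped to themselves, so $\rho_+$ induces automorphisms $\alpha\in\aut(R)$ and $\beta\in\aut(T)$, each a product of the wreath-type action. The pieces mapping $N$ onto $M$ and $M$ onto $N$ assemble into isomorphisms $\mu$ and $\nu$. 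Finally, $|M|=|N|$ follows because the number of factors moved from $S(\Gamma)$ into $S^{+}(\Sigma)$ equals the number moved back, since $\rho_+$ preserves the sets $X$ and $V(\Gamma)$-coordinate sizes. This gives $M\cong N$ and the displayed formula in~\eqref{it:itleosd1}.
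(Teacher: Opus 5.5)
Your Steps 1 and 2 are sound and follow the same factor-permutation route as the paper, based on Lemma~\ref{thm:cartesian}\eqref{thm:cartesianb}. They give part~\eqref{it:itleosd2}, and they also give the non-split description $((d_1,d_3),(a_1,a))^{\rho_+}=((a_1,d_3)^{\eta},(d_1,a)^{\alpha})$ with $D_1\cong A_1$ nontrivial.

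\textbf{The gap is in Step~3.} Let $\pi$ be the permutation of the Cartesian-prime factors of $S(\Gamma)\Box S^{+}(\Sigma)$ induced by $\rho_+$. The formula in~\eqref{it:itleosd1} forces $\pi$ to map the factor sets of $R$ and of $T$ onto themselves and to interchange those of $M$ and $N$. Equivalently, every $\pi$-cycle that meets both $S(\Gamma)$ and $S^{+}(\Sigma)$ must alternate strictly between them.

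You take $M$ (resp.\ $N$) to consist of the factors of $S(\Gamma)$ (resp.\ $S^{+}(\Sigma)$) that are images of factors from the other side. Your claim that the remaining factors are then mapped to themselves does not follow. Suppose $G_1,G_2$ are prime factors of $S(\Gamma)$ and $S_1$ is a prime factor of $S^{+}(\Sigma)$, all pairwise isomorphic, and $\pi$ contains the cycle $G_1\mapsto G_2\mapsto S_1\mapsto G_1$. Your recipe puts $G_1$ into $M$ and $G_2$ into $R$, yet $\rho_+$ carries the $G_2$-coordinate into $S^{+}(\Sigma)$.

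In fact no choice of $M,N,R,T$ gives the displayed form for such a $\pi$. The factor $G_2$ leaves $S(\Gamma)$, so it must lie in $M$. That forces $S_1$ into $N$ and $G_1$ into $M$, and then $G_2=\pi(G_1)$ would have to lie in $N$, which is impossible.

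Moreover, the map equal to such a $\rho_+$ on $X$ and to $\id$ on $Y$ is an automorphism of $S(\Gamma)\Box S(\Sigma)$. It stabilizes $X$, is a $U$-mixer on $X$, and is side-preserving on $Y$. So nothing you actually invoke can exclude it: you use only Lemma~\ref{thm:cartesian} and the definitions of $D_1,D_3,A_1,A$.

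What is missing is an argument that controls the cycle structure of $\pi$ using $\delta\in\aut(\Gamma\times\Sigma)$, not merely $\rho\in\aut(S(\Gamma)\Box S(\Sigma))$. The paper's own proof does not supply this either; it simply ``collects the common factors'' and asserts the displayed form.

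A clean repair is to weaken~\eqref{it:itleosd1} to the non-split form you already have from Step~2. This is all that Lemmas~\ref{pr:prro} and~\ref{le:leosmi} actually use. For a bijection $f\colon V(D_1)\to V(A_1)$, the map $\xi_f\colon (d_1,d_3)\mapsto (d_1^{f},d_3)^{\eta}$ is a permutation of $V(\Gamma)$. Distinct $f$ give distinct $\xi_f$ because $\eta$ is injective, and the two-fold automorphism argument then goes through unchanged.
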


\begin{proof}
\eqref{it:itleosd1}. Since $\Gamma$ and $\Sigma$ are connected graphs, by Lemma \ref{le:lecsb}\eqref{it:itcsba} $S(\Gamma)$ is connected and $S(\Sigma)$ has two connected components $S^+(\Sigma)$ and $S^-(\Sigma)$. Lemma \ref{thm:cartesian}\eqref{thm:cartesiana} implies that $S(\Gamma)$, $S^+(\Sigma)$ and $S^-(\Sigma)$ each have a unique Cartesian-prime factorization. By Lemma \ref{thm:cartesian}\eqref{thm:cartesianb} again, the automorphism $\rho$ of $S(\Gamma)\Box S^{+}(\Sigma)$ permutes isomorphic Cartesian-prime factors and acts factorwise after such a permutation.

 Since $\rho_{+}$ is a $U$-mixer, $S(\Gamma)$ and $S^+(\Sigma)$ have a common factor (with respect to the Cartesian product). By Lemma \ref{thm:cartesian}(b) we can collect all common factors of $S(\Gamma)$ and $S^+(\Sigma)$ to obtain $R$ and $T$, respectively. Thus
 $$S(\Gamma)=M\Box R, \qquad S^{+}(\Sigma)=N\Box T,$$
where $M\cong N$. This proves part \eqref{it:itleosd1}.

\eqref{it:itleosd2} Since $\rho_{-}$ is side-preserving on the $W$-side,
we have $$\rho_{-}\in\aut(S(\Gamma))\times\aut(S^{-}(\Sigma)).$$
Thus $\rho_{-}=(\lambda,\eta)$ for some $\lambda\in\aut(S(\Gamma))$ and $\eta\in\aut(S^-(\Sigma))$.
This yields the result.
\end{proof}

\begin{lemma}\label{pr:prro}
Let $(\Gamma, \Sigma)$ be a nontrivial graph pair, where $\Sigma$ is bipartite with bipartition $\{U,W\}$, let $\delta$ be an unexpected automorphism of $\Gamma\times\Sigma$ and $\rho$ the $S$-automorphism induced by $\delta$.
If $\rho$ is one-sided, then $\Gamma$ admits a non-diagonal TF-automorphism.
\end{lemma}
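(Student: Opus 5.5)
The plan is a neighbourhood-transport argument in $\Gamma\times\Sigma$ itself, not merely in the skeleton. We work with the unexpected automorphism $\delta$ rather than with $\rho$, because the permutation $\sigma$ from Lemma~\ref{claim1} is only known to lie in $\aut(S(\Sigma))$. We therefore cannot assume that $(\mathrm{id},\sigma)$ is an automorphism of $\Gamma\times\Sigma$.

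First we record how $\delta$ relates to $\rho$. Since $\delta=(\mathrm{id},\sigma)^{-1}\rho$, the $\Gamma$-coordinate of $\delta(z)$ equals the $\Gamma$-coordinate of $\rho(z)$ for every vertex $z$. The $\Sigma$-coordinate of $\delta(z)$ is obtained from that of $\rho(z)$ by the bijection $\sigma^{-1}$ of $V(\Sigma)$, which may swap $U$ and $W$. By Lemma~\ref{le:leosd}\eqref{it:itleosd2}, $\rho_-=(\lambda,\eta)$. Hence on $Y$ we have $\delta(h,w)=(h^\lambda,w^{\eta'})$ with $\eta':=\eta\sigma^{-1}$. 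Here $\lambda$ is a bijection of $V(\Gamma)$, and $\eta'$ is an injection from $W$ into $V(\Sigma)$ that maps $W$ onto one side of the bipartition.

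The key step is the neighbourhood computation. Fix $(g,u)\in X$ and write $\delta(g,u)=(g',u')$. In $\Gamma\times\Sigma$ the neighbourhood of $(g,u)$ is $N_\Gamma(g)\times N_\Sigma(u)\subseteq Y$. Since $\delta$ is an automorphism, it maps this set onto $N_\Gamma(g')\times N_\Sigma(u')$. By the formula on $Y$, the image is also $N_\Gamma(g)^\lambda\times N_\Sigma(u)^{\eta'}$. Both factors are nonempty, because the graphs are connected and nontrivial, so comparing the two product sets gives
\[
N_\Gamma(g')=N_\Gamma(g)^\lambda,\qquad N_\Sigma(u')=N_\Sigma(u)^{\eta'} .
\]
Twin-freeness of $\Gamma$ and $\Sigma$ then shows that $g'$ depends only on $g$, say $g'=g^\varphi$, and $u'$ depends only on $u$. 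In particular $\delta|_X$ preserves the $\Sigma$-partition. Applying the bijection $\sigma$ to the $\Sigma$-coordinate does not change this, so $\rho_+$ also preserves the $\Sigma$-partition.

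This contradicts the hypothesis that $\rho_+$ is a $U$-mixer, so a one-sided $\rho$ cannot occur and the lemma holds. For a constructive reading, the displayed identity says that $(\varphi,\lambda)$ is a TF-automorphism of $\Gamma$: $(g,h)\in A(\Gamma)$ if and only if $(g^\varphi,h^\lambda)\in A(\Gamma)$. Comparing with Lemma~\ref{le:leosd}\eqref{it:itleosd1}, the mixing of the $M$- and $N$-coordinates is incompatible with $\varphi$ being defined on $\Gamma$ alone. So either this gives the contradiction directly, or it forces $\varphi\ne\lambda$, giving a non-diagonal TF-automorphism.

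I expect the main obstacle to be exactly the $\sigma$ issue. The argument must use genuine adjacency in $\Gamma\times\Sigma$, which means passing from $\rho$ back to $\delta$. We also need to check that the composition with $\sigma^{-1}$ possibly exchanging $U$ and $W$ does not disturb the product-set comparison. Once that is in place, the twin-free cancellation step is routine.
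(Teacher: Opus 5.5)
Your proof is correct, but it follows a different route from the paper's.

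The paper argues directly. It starts from the explicit form of $\rho_+$ in Lemma~\ref{le:leosd}, fixes $t_0\in V(T)$ and a bijection $f\colon V(M)\to V(N)$, and pushes the adjacent pairs $\bigl(x,(f(m),t_0)\bigr)$, $\bigl(y,\omega_m^f\bigr)$ through $\delta$. Reading off first coordinates gives a TF-automorphism $(\xi_f,\lambda)$, and varying $f$ makes it non-diagonal.

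You instead compare the product sets $N_\Gamma(g')\times N_\Sigma(u')$ and $N_\Gamma(g)^\lambda\times N_\Sigma(u)^{\eta'}$. Twin-freeness of both factors then shows that $\delta|_X$, and hence $\rho_+$, is a product map. So a one-sided $\rho$ cannot exist, and the lemma holds vacuously.

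That stronger conclusion is exactly the paper's later Lemma~\ref{le:leosmi}. The paper reaches it only in a second step, by comparing $(\xi_f,\zeta)$ with $(\xi_g,\zeta)$ and cancelling by twin-freeness. Your route gets there in one step and uses only $\rho_-=(\lambda,\eta)$, not the factor-swap description of $\rho_+$. It is also explicit that $\rho$ must be taken as $\delta$ followed by $(\mathrm{id},\sigma)$, which Lemma~\ref{claim1} allows. That choice is what makes both $\pi_1\delta=\pi_1\rho$ and the product form of $\delta|_Y$ valid. The paper's route has the merit of producing the literal conclusion, a TF-automorphism, in the form used in the proof of Theorem~\ref{th:thpsp}.

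One small point: cut your ``constructive reading'' paragraph, or at least its final dichotomy. The claim that otherwise $\varphi\neq\lambda$ is not argued, and nothing needs it once the contradiction is in hand.
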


\begin{proof}
Since $\rho$ is one-sided, by Lemma~\ref{le:leosd} we may assume $$S(\Gamma)=M\Box R,\qquad  S^{+}(\Sigma)=N\Box T$$
with $$((m,r),(n,t))^{\rho_{+}}=((n^\mu,r^\alpha),(m^\nu,t^\beta)),$$
and $\rho_{-}=(\lambda,\eta)$ for suitable isomorphisms and automorphisms as in Lemma \ref{le:leosd}.

Fix a vertex $t_0\in V(T)$. Since $\Sigma$ is connected and bipartite, every vertex of $U=V(N)\times V(T)$ has a neighbour in $W$. Therefore, for each bijection $f:V(M)\to V(N)$
and each $m\in V(M)$, we can choose a vertex $\omega_m^f\in W$
adjacent to $(f(m),t_0)\in U$.
Fix such a bijection $f$. Define $\xi_f\in\sym(V(\Gamma))$ by
$$(m,r)^{\xi_f}=((f(m))^\mu,r^\alpha)$$ for all $$(m,r)\in V(M)\times V(R)=V(\Gamma),$$
and define $\zeta=\lambda\in\sym(V(\Gamma))$.

Since $f$, $\mu$, and $\alpha$ are bijections, $\xi_f$ is a permutation of $V(\Gamma)$.
Let $x=(m,r)\in V(\Gamma)$ and $y\in V(\Gamma)$. Set
$$X_x^f:=\bigl(x,(f(m),t_0)\bigr)\in V(\Gamma)\times U$$
and
$$Y_{y,m}^f:=\bigl(y,\omega_m^f\bigr)\in V(\Gamma)\times W.$$
Note that the second coordinates of $X_x^f$ and $Y_{y,m}^f$ are adjacent in $\Sigma$ by the choice of $\omega_m^f$. So, if $x$ is adjacent to $y$ in $\Gamma$, then $X_x^f$ is adjacent to $Y_{y,m}^f$ in $\Gamma\times\Sigma$.
Since $\delta$ is an automorphism of $\Gamma\times\Sigma$,
$(x,y)\in A(\Gamma)$ implies that $$\{(X_x^f)^{\delta},(Y_{y,m}^f)^{\delta}\}\in E(\Gamma\times\Sigma).$$
Taking first coordinates and using $\pi_1\delta=\pi_1\rho$, if $(x,y)\in A(\Gamma)$, then
 $$((X_x^f)^{\rho\pi_1},(Y_{y,m}^f)^{\rho\pi_1})\in A(\Gamma).$$

Since $X_x^f\in X$, we have $$(X_x^f)^{\rho\pi_1}
=((x,(f(m),t_0))^{\rho_{+}})^{\pi_1}
=((f(m)^\mu),r^\alpha)
=x^{\xi_f}.$$
It follows from $Y_{y,m}^f\in Y$ and $\rho_{-}=(\lambda,\eta)$ that $$(Y_{y,m}^f)^{\rho\pi_1}
=((y,\omega_m^f)^{\rho_{-}})^{\pi_1}
=y^{\lambda}
=y^{\zeta}.$$
Therefore for all $x,y\in V(\Gamma)$, $(x^{\xi_f},y^{\zeta})\in A(\Gamma)$ if $(x,y)\in A(\Gamma)$.

Since $\xi_f$ and $\zeta$ are permutations of $V(\Gamma)$ and $\Gamma$ is finite, the map $\psi: A(\Gamma)\to A(\Gamma)$, defined by
$(x,y)\mapsto (x^{\xi_f},y^{\zeta})$,
is injective. Hence $$(x,y)\in A(\Gamma)\Longleftrightarrow
(x^{\xi_f},y^{\zeta})\in A(\Gamma)$$
for all $x,y\in V(\Gamma)$. Thus $(\xi_f,\zeta)$ is a two-fold automorphism of $\Gamma$.

It remains to prove that $(\xi_f,\zeta)$ is non-diagonal. Since $M$ is nontrivial, $|V(M)|\ge 2$. Hence there exist two distinct bijections
\[f_1,f_2:V(M)\to V(N).\]
The corresponding permutations $\xi_{f_1}$ and $\xi_{f_2}$ are distinct because $\mu$ is bijective. Since $\zeta$ is fixed, at least one of $\xi_{f_1}$ and $\xi_{f_2}$ is different from $\zeta$. Choosing such a bijection $f$, we obtain a non-diagonal two-fold automorphism $(\xi_f,\zeta)$ of $\Gamma$.
\end{proof}

\subsection{Balanced}
We now deal with the case where $\rho$ is balanced and show that it cannot occur when $\Gamma$ is factor-loopless.

\begin{lemma}\label{le:lelp}
Let $(\Gamma, \Sigma)$ be a nontrivial graph pair, where $\Gamma$ is non-bipartite and $\Sigma$ is bipartite. Assume that $\Gamma$ is factor-loopless. Then $\Gamma\times\Sigma$ admits no balanced $\Sigma$-mixer.
\end{lemma}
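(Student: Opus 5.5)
The plan is to argue by contradiction. Suppose $\delta$ is a $\Sigma$-mixer whose induced $S$-automorphism $\rho$ is balanced. From it I would extract a direct factor of $\Gamma$, in the loops-allowed category, that carries loops. This contradicts factor-looplessness via the uniqueness of direct-prime factorization for connected non-bipartite twin-free graphs.

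First, fix the notation of Section~\ref{sec:secp}. Since $(\Gamma,\Sigma)$ is nontrivial, $D_0$ is trivial, so
\[
S(\Gamma)=D_1\Box D_2\Box D_3,\qquad S^{+}(\Sigma)=A_1\Box A,\qquad S^{-}(\Sigma)=B_2\Box B,
\]
and $\rho_+,\rho_-$ have the displayed forms with isomorphisms $\eta,\alpha,\theta,\beta$. Balancedness means both $\rho_+$ and $\rho_-$ are $\Sigma$-mixers. Hence both $D_1$ and $D_2$ are nontrivial, and $D_1\not\cong$ any factor of $B_2$ while $D_2$ shares no prime factor with $A_1$.

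Next, test the adjacencies between $X$ and $Y$ in $\Gamma\times\Sigma$. Take $(g,u)\in X$ and $(h,w)\in Y$ with $u\sim w$ in $\Sigma$ and $g\sim h$ in $\Gamma$. Write $\delta=(\id,\sigma)\rho$, where $\sigma$ is the map from Lemma~\ref{claim1}, so that $\pi_1\delta=\pi_1\rho$.
\begin{itemize}
\item On the $X$-side, $\rho_+$ moves the $D_1$-coordinate $d_1$ of $g$ into $\Sigma$ and replaces it by $a_1$.
\item On the $Y$-side, $\rho_-$ keeps $d_1$ in $\Gamma$ but moves $d_2$ into $\Sigma$.
\end{itemize}
Preservation of adjacency lets us vary the $A_1$-coordinate of $u$ freely while fixing the rest. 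As in the proof of Lemma~\ref{pr:prro}, this is possible because every vertex of $U$ has a neighbour in $W$. Doing so shows that whether $g^{\rho}$ is adjacent to $h^{\rho}$ cannot depend on the $D_1$-coordinate of $g$ in a way that separates it from that of $h$. Symmetrically, whether $g^\rho$ is adjacent to $h^\rho$ cannot depend on the $D_2$-coordinate of $h$ in such a way either.

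Concretely, I would aim to prove that adjacency in $\Gamma$ factors as
\[
(d_1,d')\sim(e_1,e')\iff d_1\mathrel{\Theta}e_1 \text{ and } d'\sim_{\Gamma'} e'
\]
for a relation $\Theta$ on $V(D_1)$ that contains the diagonal. That is, I want $\Gamma\cong\Lambda\times\Gamma'$ where $\Lambda$ is a graph on $V(D_1)$ with a loop at every vertex. Reflexivity should come from the fact that two vertices of $X$ and $Y$ whose images share the same $D_1$-coordinate are still adjacent. It should also be consistent with $S(\Lambda)$ being the $D_1$ part of the skeleton, since Lemma~\ref{le:lecsb}\eqref{le:lecsd} extends to twin-free graphs with loops.

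Finally, $\Lambda$ is nontrivial because $|V(D_1)|\ge2$. Once we have $\Gamma\cong\Lambda\times\Gamma'$, unique prime factorization of the connected non-bipartite graph $\Gamma$ in the loops-allowed category applies. Because $\Lambda$ has a loop at every vertex, each of its direct-prime factors does too. So some direct-prime factor $\Gamma_i$ of $\Gamma$ has loops, contradicting the assumption that $\Gamma$ is factor-loopless.

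The main obstacle is the middle step: turning the balanced mixing data into an honest direct-product decomposition of $\Gamma$ with a looped factor on $V(D_1)$. In particular one must check that the relation $\Theta$ is well defined, meaning independent of the remaining coordinates, and that the complementary relation defines $\Gamma'$. I would first try to get this by comparing $\rho_+$ and $\rho_-$ through the isomorphisms $\alpha$ and $\theta$ along edges between $X$ and $Y$, using the connectivity of $S^\pm(\Sigma)$ to propagate the identity.
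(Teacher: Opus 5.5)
\textbf{There is a genuine gap, and it is in your middle step.} The claimed splitting $\Gamma\cong\Lambda\times\Gamma'$ is false in general. Here $\Lambda$ is a reflexive graph on $V(D_1)$, with adjacency $d_1\mathrel{\Theta}e_1$ and $d'\sim_{\Gamma'}e'$, and this is not what a balanced mixer forces.

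Nothing in your derivation uses that $\Gamma$ is loopless. So it would apply verbatim to the pair $(\Delta,\Omega)$ of Section~\ref{sec:seca}:
\begin{itemize}
\item $\phi_R$ is a balanced $\Omega$-mixer there: it swaps the second coordinate of $\Delta$ with the $U$-index and the first coordinate with the $W$-index.
\item $S(\Delta)\cong K_2\Box K_2$, so $D_1\cong D_2\cong K_2$ and $D_3$ is trivial.
\item Yet $\Delta$ is direct-prime by Lemma~\ref{le:lecps}, so no decomposition into two factors of order $2$ exists.
\end{itemize}

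The reason shows up when you transport an $X$--$Y$ edge through $\rho$. The $D_1$-coordinate of the $X$-endpoint is exchanged with the $A_1$-coordinate of $u$. The $D_2$-coordinate of the $Y$-endpoint is exchanged with the $B_2$-coordinate of $w$. So the constraint ties the $D_1$-coordinate of one endpoint to the $D_2$-coordinate of the \emph{other} endpoint, never $D_1$ to $D_1$. This is exactly the pattern in $\Gamma_R$, where $(a,b)\sim(c,d)$ iff $(a,d),(c,b)\in R$.

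Relatedly, you cannot vary the $A_1$-coordinate of $u$ ``freely'' as in Lemma~\ref{pr:prro}. There the choice of $W$-neighbour was harmless only because $\rho_-$ was side-preserving, so its first projection ignored the $W$-coordinate. In the balanced case $\rho_-$ reads the $B_2$-coordinate of $w$. Changing $u$, and hence $w$, therefore changes the first projection of the $Y$-image. Your ``reflexivity'' heuristic has no content either.

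\textbf{What works instead needs only one loop, not a reflexive factor.} Work in the model where $\rho_\pm$ simply swap coordinates, as the paper does via \cite[Lemma~4.1]{GLX2025}.
\begin{enumerate}
\item Take an edge $g\sim h$ of $\Gamma$ together with any edge of $\Sigma$. This forces a $\Sigma$-edge whose $A_1$- and $B_2$-coordinates are $g_1$ and $h_2$.
\item Feed that $\Sigma$-edge back together with the reversed edge $h\sim g$. This forces $(g_1,h_2,\ast)\sim(g_1,h_2,\ast)$ in $\Gamma$.
\end{enumerate}
When $D_3$ is trivial, this is a loop in $\Gamma$ directly. In general, \cite[Lemma~4.3]{GLX2025} gives $\Gamma\cong\Delta_{12}\times\Delta_3$, and the above is a loop of $\Delta_{12}$ at $(g_1,h_2)$.

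A loop at $(x,y)$ in a direct product requires loops at both $x$ and $y$. Hence every factor in any factorization of $\Delta_{12}$ carries a loop, and unique prime factorization contradicts factor-looplessness. Your final paragraph handles this last step correctly; it is the construction of the looped factor that has to be replaced.
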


\begin{proof}
Suppose, for a contradiction, that there exists a $\Sigma$-mixer $\rho$.
As shown in Section \ref{sec:secp}, we may assume
\[
S(\Gamma)=D_1\Box D_2\Box D_3,
\qquad
S^{+}(\Sigma)= A_1\Box A,
\qquad
S^{-}(\Sigma)= B_2\Box B.
\]

If $\Gamma$ is prime, by \cite[Lemma~4.3]{GLX2025}, the factor $D_3$ is trivial. Otherwise, this would contradict the fact that $\Gamma$ is prime. We assume that $D_3$ is trivial, and that $D_1$ and $D_2$ are nontrivial. Then $|D_1|, |D_2|>1$.
Since $\Gamma$ is connected, $\Gamma$ has at least one edge. Suppose that $$\{(c_1,c_2),(d_1,d_2)\}$$ is an edge in $\Gamma$, where $c_1, d_1\in V(D_1)$ and $c_2, d_2\in V(D_2)$.
Since $\Gamma$ is undirected,
$$\{(d_1,d_2),(c_1,c_2)\}$$ is also an edge in $\Gamma$.
Apply \cite[Lemma~4.1]{GLX2025} to the two edges
$$\{(c_1,c_2),(d_1,d_2)\},\qquad \{(d_1,d_2),(c_1,c_2)\}.$$
By \cite[Lemma~4.1]{GLX2025} again, $\{(d_1,c_2),(d_1,c_2)\}$ is also an edge in $\Gamma$ .
Thus $\Gamma$ has a loop, a contradiction. So $\Gamma\times\Sigma$ admits no balanced $\Sigma$-mixer when $\Gamma$ is prime.

We now consider the case where $\Gamma$ is not prime with respect to the direct product.
Define \[
V(\Delta_{12})=V(D_1\Box D_2),
\qquad
V(\Delta_3)=V(D_3),
\]
where $
(c_1,c_2)$ is adjacent to $(d_1,d_2)$ in $\Delta_{12}$
if and only if there exist $c_3,d_3\in V(D_3)$ such that
$(c_1,c_2,c_3)$ is adjacent to $(d_1,d_2,d_3)$ in $\Gamma$, and $c_3$ is adjacent to $d_3$ in $\Delta_3$
if and only if there exist
$$(c_1,c_2),(d_1,d_2)\in V(D_1\Box D_2)$$ such that $
(c_1,c_2,c_3)$ is adjacent to $(d_1,d_2,d_3)$ in $\Gamma$.
By \cite[Lemma~4.3]{GLX2025}, we have
$\Gamma\cong \Delta_{12}\times\Delta_3$.

 Since
$\rho$ is balanced, $D_1$ and $D_2$
have order greater than $1$.
Since $\Gamma\cong \Delta_{12}\times\Delta_3$, the direct-product factor
$\Delta_{12}$ has at least one edge. Choose an edge $$\{(c_1,c_2),(d_1,d_2)\}\in\Delta_{12}.$$
By the definition of $\Delta_{12}$, there exist $c_3,d_3\in V(D_3)$ such that $(c_1,c_2,c_3)$ is adjacent to $(d_1,d_2,d_3)$ in $\Gamma$.
Since $\Gamma$ is undirected, $$\{(d_1,d_2,d_3),(c_1,c_2,c_3)\}$$ is also an edge in $\Gamma$.

Apply \cite[Lemma~4.1]{GLX2025} to the two edges $$\{(c_1,c_2,c_3),(d_1,d_2,d_3)\},\qquad \{(d_1,d_2,d_3),(c_1,c_2,c_3)\}.$$
We obtain that $(d_1,c_2,c_3)$ is adjacent to $(d_1,c_2,d_3)$ in $\Gamma$.
By the definition of $\Delta_{12}$, this means exactly that there is an edge $$\{(d_1,c_2),(d_1,c_2)\}\in E(\Delta_{12}).$$
Thus $\Delta_{12}$ has a loop. Since $\Gamma\cong\Delta_{12}\times\Delta_3$ and the factorization is unique, there exists a direct-product factor $\Delta_{12}$ of $\Gamma$ with loops. This is a contradiction. So this case cannot occur.

Thus $\Gamma\times\Sigma$ does not admit a balanced $\Sigma$-mixer.
\end{proof}

We now give the proof of Theorem \ref{th:thpsp}.

\begin{proof}[Proof of Theorem~\ref{th:thpsp}]
Set $\Delta=\Gamma^q$ for some $q\geq 1$.
Suppose, for a contradiction, that $\Delta$ is not Sym-prime. Then there exists a bipartite graph $\Sigma$ such that $(\Delta,\Sigma)$ is a nontrivially unstable graph pair.
Since $\Delta$ is stable, Lemma \ref{le:lepn} gives a $\Sigma$-mixer of $\Delta\times\Sigma$. Let $\rho$ be the induced $S$-automorphism. Then $\rho$ is either one-sided or balanced.
 Since $\Delta$ is factor-loopless, Lemma \ref{le:lelp} shows that $\rho$ cannot be balanced. Hence $\rho$ must be one-sided. By Lemma \ref{pr:prro}, the graph $\Delta$ admits a non-diagonal TF-automorphism. This contradicts the stability of $\Delta$. Thus no such $\Sigma$ exists, and $\Delta$ is Sym-prime.
\end{proof}

\section{The proof of Theorems~\ref{th:thgpsg} and~\ref{th:thfless}}\label{sec:secthppsp}

In this section, we give the proof of the main result of this paper.

\begin{lemma}\label{le:lecspsp}
Let $\Gamma_1$ and $\Gamma_2$ be two connected twin-free non-bipartite graphs.
Suppose that $\Gamma_1$ and $\Gamma_2$ are coprime with respect to the direct product. If $\Gamma_1$ and $\Gamma_2$ are both Sym-prime, then $\Gamma_1\times\Gamma_2$ is Sym-prime.
\end{lemma}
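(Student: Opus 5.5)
We must show two things about $\Gamma:=\Gamma_1\times\Gamma_2$: it is stable, and every nontrivial pair $(\Gamma,\Sigma)$ with $\Sigma$ connected bipartite is stable. $\Gamma$ is connected, non-bipartite and twin-free, since all three properties pass to direct products of such graphs.

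\emph{Stability of $\Gamma$.} Since $K_2$ is connected, bipartite and twin-free, we first check whether $K_2$ is coprime to $\Gamma_2$. It is, because $\Gamma_2$ is non-bipartite while any graph with a factor $K_2$ is bipartite. So $(\Gamma_2,K_2)$ is a nontrivial pair and is stable. That is already given, since Sym-prime graphs are stable by definition. The real task is to show that $(\Gamma_1,\Gamma_2\times K_2)$ is stable, using $\Gamma\times K_2=\Gamma_1\times(\Gamma_2\times K_2)$. The graph $\Gamma_2\times K_2$ is connected (as $\Gamma_2$ is connected and non-bipartite), bipartite and twin-free. It is coprime to $\Gamma_1$, since any common prime factor would be non-bipartite and would have to divide $\Gamma_2$, contradicting coprimality of $\Gamma_1,\Gamma_2$ by unique prime factorization (Theorem~8.18-type uniqueness in \cite{HIK2011}). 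So $(\Gamma_1,\Gamma_2\times K_2)$ is a nontrivial pair, and Sym-primality of $\Gamma_1$ makes it stable. Therefore
\[
\aut(\Gamma\times K_2)=\aut(\Gamma_1)\times\aut(\Gamma_2\times K_2)=\aut(\Gamma_1)\times\aut(\Gamma_2)\times\mathbb Z_2 .
\]
Lemma~\ref{lem:TF} gives $|\aut(\Gamma\times K_2)|=2|\TFA(\Gamma)|$. We also need $\aut(\Gamma_1\times\Gamma_2)=\aut(\Gamma_1)\times\aut(\Gamma_2)$; this holds because coprime connected non-bipartite twin-free graphs form a stable pair \cite[Theorem~8.18]{HIK2011}. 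Hence $|\TFA(\Gamma)|=|\aut(\Gamma)|$, so $\Gamma$ is stable.

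\emph{Pairs with a bipartite factor.} Let $\Sigma$ be connected, bipartite, twin-free and coprime to $\Gamma$. Then
\[
\Gamma\times\Sigma=\Gamma_1\times(\Gamma_2\times\Sigma).
\]
First, $(\Gamma_2,\Sigma)$ is nontrivial, since $\Sigma$ is coprime to $\Gamma$ and hence to its factor $\Gamma_2$. So $\aut(\Gamma_2\times\Sigma)=\aut(\Gamma_2)\times\aut(\Sigma)$. Next, $\Gamma_2\times\Sigma$ is connected (one factor is non-bipartite), bipartite and twin-free. It is coprime to $\Gamma_1$: by unique prime factorization of $\Gamma_2\times\Sigma$ in the connected twin-free setting, a common nontrivial factor $\Delta$ would have all its prime factors dividing $\Gamma_2$ or $\Sigma$. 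Such a factor would then be shared by $\Gamma_1$ with $\Gamma_2$ or with $\Sigma$, contradicting the coprimality hypotheses. Hence $(\Gamma_1,\Gamma_2\times\Sigma)$ is nontrivial, and Sym-primality of $\Gamma_1$ gives
\[
\aut(\Gamma\times\Sigma)=\aut(\Gamma_1)\times\aut(\Gamma_2)\times\aut(\Sigma)\le\aut(\Gamma)\times\aut(\Sigma).
\]
The reverse inclusion always holds, so $(\Gamma,\Sigma)$ is stable.

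The main obstacle is the coprimality bookkeeping. Prime factorization of connected non-bipartite graphs is unique, but bipartite connected graphs may factor non-uniquely. The argument above claims that a common factor of $\Gamma_1$ and $\Gamma_2\times\Sigma$ splits into a part dividing $\Gamma_2$ and a part dividing $\Sigma$. To justify this, I would use that $\Gamma_1$ is non-bipartite, so any common factor $\Delta$ is non-bipartite. Then I would invoke the cancellation/uniqueness results for twin-free connected graphs in \cite{HIK2011}. If needed, I would pass through Cartesian skeletons, $S(\Gamma_2\times\Sigma)=S(\Gamma_2)\Box S(\Sigma)$, and apply Lemma~\ref{thm:cartesian}\eqref{thm:cartesiana} to compare factors.
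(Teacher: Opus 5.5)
\textbf{Same route as the paper, with the same unproved step.} Your argument is the paper's argument. You write $\Gamma\times\Sigma=\Gamma_1\times(\Gamma_2\times\Sigma)$, apply Sym-primality of $\Gamma_2$ to $(\Gamma_2,\Sigma)$ and of $\Gamma_1$ to $(\Gamma_1,\Gamma_2\times\Sigma)$, and finish with \cite[Theorem~8.18]{HIK2011} for $(\Gamma_1,\Gamma_2)$. Your separate stability paragraph is just the case $\Sigma=K_2$ of the second paragraph, since $K_2$ is coprime to every non-bipartite graph.

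The gap is the one step with real content: that $\Gamma_1$ and $\Gamma_2\times\Sigma$ are coprime. Without it, Sym-primality of $\Gamma_1$ says nothing about $(\Gamma_1,\Gamma_2\times\Sigma)$. Your justification is that every non-bipartite prime dividing $\Gamma_2\times\Sigma$ divides $\Gamma_2$ or $\Sigma$. This is false, precisely because the connected bipartite graph $\Gamma_2\times\Sigma$ has no unique prime factorization. Here is a counterexample:
\begin{itemize}
\item Let $L$ be the graph on $\{0,1,2\}$ with edges $\{0,1\},\{0,2\}$ and loops at $1$ and $2$. One checks that $L\times K_2\cong C_6\cong K_3\times K_2$.
\item Take $\Gamma_2=L\times C_5$, which is connected, non-bipartite, twin-free and loopless, and take $\Sigma=K_2$.
\item Then $\Gamma_2\times K_2\cong K_3\times C_{10}$, so $K_3$ is a direct factor of $\Gamma_2\times\Sigma$.
\item By unique factorization of connected non-bipartite graphs, the primes of $\Gamma_2$ are $L$ and $C_5$, so $K_3$ divides neither $\Gamma_2$ nor $K_2$.
\item Concretely, $\Gamma_1=K_3\times C_7$ is coprime to $\Gamma_2$, and $K_2$ is coprime to the non-bipartite graph $\Gamma_1\times\Gamma_2$. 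Yet $\Gamma_1$ and $\Gamma_2\times\Sigma$ share the factor $K_3$.
\end{itemize}
Since $\Sigma=K_2$ here, the same example also refutes the coprimality claim in your stability paragraph.

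\textbf{What is still needed.} This does not refute the lemma, because this $\Gamma_2$ is unstable. Indeed, $|\aut(K_3\times C_{10})|\ge 120$, while $\aut(\Gamma_2)=\aut(L)\times\aut(C_5)$ has order $20$, so $2|\aut(\Gamma_2)|=40$.

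What the example shows is that nontriviality of $(\Gamma_1,\Gamma_2\times\Sigma)$ cannot come from factorization bookkeeping. Your fallback (cancellation, or comparing the skeletons $S(\Gamma_2)\Box S(\Sigma)$) cannot work unless it brings in the stability hypotheses. You would have to show, for instance, that a non-bipartite common factor of $\Gamma_1$ and $\Gamma_2\times\Sigma$ forces an unexpected automorphism of $\Gamma_2\times\Sigma$, or a non-diagonal TF-automorphism of $\Gamma_1$ or $\Gamma_2$.

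The paper's proof does not supply this either. It checks only that $(\Gamma_1,\Sigma)$ and $(\Gamma_2,\Sigma)$ are nontrivial, and then applies Sym-primality of $\Gamma_1$ to $\Gamma_2\times\Sigma$ without verifying coprimality. So your proposal reproduces the paper's argument including its unproved step. To be complete, it needs an actual proof of that step.
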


\begin{proof}

Suppose that $\Gamma_1\times\Gamma_2$ is not Sym-prime. Then there exists a bipartite graph $\Sigma$ such that the nontrivial graph pair $(\Gamma_1\times\Gamma_2,\Sigma)$ is a nontrivial unstable graph pair.
Since $(\Gamma_1\times\Gamma_2,\Sigma)$ is nontrivial, both $(\Gamma_1,\Sigma)$ and $(\Gamma_2,\Sigma)$ are nontrivial graph pairs.
Indeed, connectedness and twin-freeness are inherited by direct products in this setting, and any common direct-product factor would also be a common factor of $\Gamma_1\times\Gamma_2$ and $\Sigma$.
Since $\Gamma_2$ is Sym-prime, the pair $(\Gamma_2,\Sigma)$ is stable. Hence \[\aut(\Gamma_2\times\Sigma)=\aut(\Gamma_2)\times\aut(\Sigma).\]
Since $\Gamma_1$ is Sym-prime and $\Gamma_2\times\Sigma$ is bipartite, the pair $(\Gamma_1,\Gamma_2\times\Sigma)$ is stable. Hence \[\aut(\Gamma_1\times\Gamma_2\times\Sigma)=\aut(\Gamma_1)\times\aut(\Gamma_2\times\Sigma).\]
Combining these two equalities gives \[\aut(\Gamma_1\times\Gamma_2\times\Sigma)=\aut(\Gamma_1)\times\aut(\Gamma_2)\times\aut(\Sigma).\]
Since $\Gamma_1$ and $\Gamma_2$ are coprime connected non-bipartite twin-free graphs,
\[\aut(\Gamma_1\times\Gamma_2)=\aut(\Gamma_1)\times\aut(\Gamma_2).\] Therefore \[\aut(\Gamma_1\times\Gamma_2\times\Sigma)=\aut(\Gamma_1\times\Gamma_2)\times\aut(\Sigma),\]
which contradicts the instability of $(\Gamma_1\times\Gamma_2,\Sigma)$.
Thus $\Gamma_1\times\Gamma_2$ is Sym-prime.
\end{proof}

\begin{lemma}\label{le:lespisf}
Let $\Gamma=\Gamma_1\times\Gamma_2\times\cdots\times\Gamma_m$ be a direct product of connected non-bipartite graphs.
If $\Gamma$ is stable, then each $\Gamma_i$ is stable.
\end{lemma}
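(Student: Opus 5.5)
We argue by contraposition, using the TF-automorphism criterion of Lemma~\ref{lem:TF}. A direct product of connected graphs need not be connected, but the lemma's hypothesis is that $\Gamma$ is stable. We take stability to be meaningful in the intended setting: $\Gamma$ connected and non-bipartite, so that Lemma~\ref{lem:TF} applies to $\Gamma$. Each factor $\Gamma_i$ is connected and non-bipartite by hypothesis, so Lemma~\ref{lem:TF} applies to each $\Gamma_i$ as well.

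Suppose some $\Gamma_i$ is unstable; by symmetry take $i=1$. By Lemma~\ref{lem:TF}, $\Gamma_1$ admits a non-diagonal TF-automorphism $(\alpha,\beta)$, so $\alpha\neq\beta$ and
\[
(u,v)\in A(\Gamma_1)\iff(u^\alpha,v^\beta)\in A(\Gamma_1).
\]
Set $\Gamma' := \Gamma_2\times\cdots\times\Gamma_m$, so that $\Gamma = \Gamma_1\times\Gamma'$. Define permutations of $V(\Gamma)=V(\Gamma_1)\times V(\Gamma')$ by
\[
\hat\alpha := (\alpha,\mathrm{id}), \qquad \hat\beta := (\beta,\mathrm{id}).
\]

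We check that $(\hat\alpha,\hat\beta)$ is a TF-automorphism of $\Gamma$. Take $x=(u,u')$ and $y=(v,v')$. By the definition of the direct product, $(x,y)\in A(\Gamma)$ if and only if $(u,v)\in A(\Gamma_1)$ and $(u',v')\in A(\Gamma')$. The first condition is equivalent to $(u^\alpha,v^\beta)\in A(\Gamma_1)$, and the second coordinates are unchanged. Hence $(x,y)\in A(\Gamma)$ if and only if $(x^{\hat\alpha},y^{\hat\beta})\in A(\Gamma)$.

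We check that $(\hat\alpha,\hat\beta)$ is non-diagonal. Pick $u$ with $u^\alpha\neq u^\beta$ and any $u'\in V(\Gamma')$. Then $(u,u')^{\hat\alpha}\neq(u,u')^{\hat\beta}$, so $\hat\alpha\neq\hat\beta$.

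By Lemma~\ref{lem:TF} applied to the connected non-bipartite graph $\Gamma$, a non-diagonal TF-automorphism makes $\Gamma$ unstable, a contradiction. Hence every $\Gamma_i$ is stable.

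The only delicate step is justifying that Lemma~\ref{lem:TF} applies to $\Gamma$, which requires $\Gamma$ to be connected and non-bipartite. Non-bipartiteness is immediate, since odd cycles of equal length in each factor give an odd closed walk in the product. Connectedness holds because a direct product of connected non-bipartite graphs is connected, a standard fact (Weichsel's theorem). If one prefers not to cite that, the statement can be read with $\Gamma$ assumed connected, as in the applications to Theorem~\ref{th:thfless}.
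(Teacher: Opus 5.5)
Your proof is correct and follows the paper's argument: lift a non-diagonal TF-automorphism of the unstable factor to $\Gamma$ by acting trivially on the remaining coordinates, then apply Lemma~\ref{lem:TF}. The connectedness discussion is harmless but unnecessary, because for any graph a non-diagonal TF-automorphism $(\hat\alpha,\hat\beta)$ directly gives the automorphism $(x,0)\mapsto(x^{\hat\alpha},0)$, $(x,1)\mapsto(x^{\hat\beta},1)$ of $\Gamma\times K_2$, which lies outside $\aut(\Gamma)\times\aut(K_2)$; also, in your aside, ``odd cycles of equal length'' should read ``odd closed walks of a common odd length''.
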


\begin{proof}
Suppose that some $\Gamma_i$ is unstable. Since $\Gamma_i$ is connected and non-bipartite, Lemma \ref{lem:TF} gives a non-diagonal TF-automorphism $(\alpha,\beta)\in\TFA(\Gamma_i)$.
We can extend the TF-automorphism to a TF-automorphism of $\Gamma$ by acting as $(\alpha,\beta)$ on the $i$-th coordinate and $\id$ on all other coordinates.
This gives a non-diagonal TF-automorphism of $\Gamma$. By Lemma \ref{lem:TF}, $\Gamma$ is unstable, contradicting the assumption. Hence every $\Gamma_i$ is stable.
\end{proof}

We now prove~Theorem \ref{th:thfless}.

\begin{proof}[Proof of Theorem~\ref{th:thfless}]

Suppose $$\Gamma=\Gamma_1^{a_1}\times\cdots\times\Gamma_m^{a_m}$$ is the prime direct product decomposition of $\Gamma$.
Since $\Gamma$ is stable, by Lemma \ref{le:lespisf}, each factor $\Gamma_i^{a_i}$ is stable.
By Theorem~\ref{th:thpsp}, $\Gamma_i^{a_i}$ is Sym-prime for each $1\leq i\leq m$. Combining this with Lemma \ref{le:lecspsp}, we obtain that  $\Gamma$ is Sym-prime.
\end{proof}

Combining the preceding results, we obtain ~Theorem \ref{th:thgpsg}.

\begin{proof}[Proof of Theorem~\ref{th:thgpsg}]
It was proved in \cite[Theorem~1.4]{WQX2025} that if $(\Gamma,\Sigma)$ is stable, then $\Gamma$ is stable. We only need to prove the ``if'' part.
Let $$\Gamma=\Gamma_1^{a_1}\times\cdots\times\Gamma_m^{a_m}$$ be the prime direct product decomposition of $\Gamma$.
Since $\Gamma$ is stable, each factor $\Gamma_i^{a_i}$, $1\leq i\leq m$, is stable. Therefore, by Theorem \ref{th:thfless}, $\Gamma$ is Sym-prime. Thus each nontrivial graph pair $(\Gamma,\Sigma)$ is stable. That is, if $\Gamma$ is stable, then for any bipartite graph $\Sigma$, the nontrivial graph pair $(\Gamma,\Sigma)$ is stable.
This establishes the ``if'' part.
\end{proof}

\section{Examples and counterexamples}\label{sec:seca}
In this section, loops are allowed. Let $\Gamma$ be a graph. Denote by $\ell(\Gamma)$ the number of vertices of $\Gamma$ that carry loops. We will construct explicit graph pairs that illustrate the necessity of the loopless condition in Conjecture \ref{conj:conjsps} and answer three questions posed by Gan, Liu and Xia \cite{GLX2025}.

\begin{definition}\label{de:detsc}
Let $T$ be a finite set, and let $R\subseteq T\times T$
be a symmetric binary relation. Define $\Gamma_R$ to be the graph with vertex set $V(\Gamma_R)=T\times T$. Two vertices $(a,b),(c,d)\in T\times T$ are adjacent if and only if $(a,d)\in R \text{ and } (c,b)\in R$. In addition, we define a bipartite graph $\Sigma_R$ with bipartition $U=\{x_i:i\in T\}$ and $W=\{y_j:j\in T\}$,
where $x_i$ is adjacent to $y_j$ if and only if $(i,j)\in R$.
\end{definition}

We now construct an automorphism of $\Gamma_R\times\Sigma_R$ which is a $\Sigma_R$-mixer.

\begin{lemma}\label{le:letsa}
Let $R\subseteq T\times T$ be a symmetric binary relation. Define a map $\phi_R:V(\Gamma_R\times\Sigma_R)\to V(\Gamma_R\times\Sigma_R)$ such that for any $((a,b),x_i)\in V(\Gamma_R)\times U$,
\[\bigl((a,b),x_i\bigr)^{\phi_R}
=
\bigl((a,i),x_b\bigr)
\]
and for any $((a,b),y_j)\in V(\Gamma_R)\times W$,
\[
\bigl((a,b),y_j\bigr)^{\phi_R}
=
\bigl((j,b),y_a\bigr).
\]
Then $\phi_R$ is an automorphism of $\Gamma_R\times\Sigma_R$ and $
\phi_R^2=\id$.
\end{lemma}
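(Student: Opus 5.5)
Since $\phi_R$ is a self-map of a finite set, I will first verify $\phi_R^2=\id$; this makes $\phi_R$ a bijection. Then I only need to check that $\phi_R$ maps edges to edges, because a bijection on a finite graph that sends edges to edges is an automorphism. For the involution property I argue side by side. The map $\phi_R$ sends $V(\Gamma_R)\times U$ to itself and $V(\Gamma_R)\times W$ to itself. On the $U$-side, $((a,b),x_i)\mapsto((a,i),x_b)\mapsto((a,b),x_i)$. On the $W$-side, $((a,b),y_j)\mapsto((j,b),y_a)\mapsto((a,b),y_j)$. Hence $\phi_R^2=\id$.

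For edge preservation, note first that $\Sigma_R$ is bipartite. So every edge of $\Gamma_R\times\Sigma_R$ joins a vertex $((a,b),x_i)$ to a vertex $((c,d),y_j)$ with $(i,j)\in R$ and $(a,b)\sim(c,d)$ in $\Gamma_R$. By Definition~\ref{de:detsc}, adjacency in $\Gamma_R$ means $(a,d)\in R$ and $(c,b)\in R$. Thus an edge is encoded by the three conditions
\[
(i,j)\in R,\qquad (a,d)\in R,\qquad (c,b)\in R .
\]
The images are $((a,i),x_b)$ and $((j,d),y_c)$. For these to be adjacent I need three conditions:
\begin{itemize}
\item[] $(b,c)\in R$ for the $\Sigma_R$-coordinates,
\item[] $(a,d)\in R$ for the first part of adjacency of $(a,i)$ and $(j,d)$ in $\Gamma_R$,
\item[] $(j,i)\in R$ for the second part.
\end{itemize}
The condition $(a,d)\in R$ is given directly. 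The other two follow from the given conditions $(c,b)\in R$ and $(i,j)\in R$ by symmetry of $R$. So the image pair is an edge.

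The one subtlety is loops. When loops are allowed in $\Gamma_R$, a vertex of $\Gamma_R$ may be adjacent to itself. The computation above treats $(a,b),(c,d)$ as an arbitrary ordered pair satisfying the relation, including $(a,b)=(c,d)$, so loops in $\Gamma_R$ are handled uniformly. Since $\Sigma_R$ has no loops, $\Gamma_R\times\Sigma_R$ itself is loopless and all its edges have the form considered. Symmetry of $R$ also guarantees that the adjacency in Definition~\ref{de:detsc} is well defined, that is, symmetric in the two endpoints.

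Combining the two parts, $\phi_R$ is a bijection sending edges to edges. By finiteness it sends non-edges to non-edges, so $\phi_R\in\aut(\Gamma_R\times\Sigma_R)$. There is no real obstacle; the only care needed is to apply the symmetry of $R$ to the conditions in the correct order.
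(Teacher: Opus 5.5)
Your proposal is correct and follows the paper's proof: $\phi_R^2=\id$ gives bijectivity, and the three conditions $(i,j),(a,d),(c,b)\in R$ encoding an edge are matched, using symmetry of $R$, with the conditions $(b,c),(a,d),(j,i)\in R$ for the image pair. One harmless inaccuracy is your remark that symmetry of $R$ makes adjacency in $\Gamma_R$ symmetric in its endpoints. That holds for every relation $R$, since both orders give the same two conditions, so symmetry of $R$ is needed only to pass from $(c,b),(i,j)\in R$ to $(b,c),(j,i)\in R$.
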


\begin{proof}
It is immediate from the definition that applying $\phi_R$ twice gives the identity.
Thus $\phi_R$ is a bijection.

It remains to check that $\phi_R$ maps edges of $\Gamma_R\times\Sigma_R$ to edges of $\Gamma_R\times\Sigma_R$. Since $\Sigma_R$ is bipartite, every edge of
$\Gamma_R\times\Sigma_R$ has one endpoint over the $U$-part and one endpoint over the
$W$-part. Take two vertices $\bigl((a,b),x_i\bigr)$ and $\bigl((e,f),y_j\bigr)$ in $V(\Gamma_R\times\Sigma_R)$.
The vertices $\bigl((a,b),x_i\bigr)$ and $\bigl((e,f),y_j\bigr)$ are adjacent in $\Gamma_R\times\Sigma_R$ if and only if $(a,b)$ is adjacent to $(e,f)$ in $\Gamma_R$ and $x_i$ is adjacent to $y_j$ in $\Sigma_R$.
By the definitions of $\Gamma_R$ and $\Sigma_R$, this is equivalent to $(a,f)\in R$, $(e,b)\in R$ and $(i,j)\in R$.

 Since $$\bigl((a,b),x_i\bigr)^{\phi_R}=\bigl((a,i),x_b\bigr),\qquad \bigl((e,f),y_j\bigr)^{\phi_R}=\bigl((j,f),y_e\bigr),$$ the vertices $(a,i)$ and $(j,f)$ are adjacent in $\Gamma_R$, and $x_b$ is adjacent to $y_e$ in $\Sigma_R$.
By Definition \ref{de:detsc}, we have $$(a,f), (j,i), (b,e)\in R.$$
Since $R$ is a symmetric relation, the condition $(j,i), (b,e)\in R$ is equivalent to $(i,j), (e,b)\in R$.
Thus $\bigl((a,b),x_i\bigr)^{\phi_R}$ is adjacent to $\bigl((e,f),y_j\bigr)^{\phi_R}$ in $\Gamma_R\times\Sigma_R$. Together with $\phi_R^2=\id$, we conclude that $\phi_R$ is an automorphism of $\Gamma_R\times\Sigma_R$.
\end{proof}

We now show an example from Definition \ref{de:detsc}, which will provide an answer to \cite[Question~3.5]{GLX2025}.

Take the relation $R=\{(0,1),(1,0),(1,1)\}$ in Definition \ref{de:detsc}. Let $\Delta$ be the graph (see Fig. \ref{fig:fig}) with vertex set $V(\Delta)=\{0,1\}\times\{0,1\}$ such that $(u, v)$ is adjacent to $(x,y)$ if and only if $(u,y)\in R$ and $(x,v)\in R$. Let $\Omega$ be the bipartite graph with vertex set $V(\Omega)=\{x_0,x_1, y_0,y_1\}$ and the bipartition $V(\Omega)=\{X, Y\}$ such that $X=\{x_0,x_1\}$ and $Y=\{y_0,y_1\}$ and $x_i$ is adjacent to $y_j$ if and only if $(i,j)\in R$. Clearly, $$E(\Omega)=\{\{x_0,y_1\}, \{x_1,y_0\}, \{x_1,y_1\}\}.$$
\begin{center}
\begin{figure}[htbp]
\begin{tikzpicture}[
  vertex/.style={circle, fill=white, draw, minimum size=3pt, inner sep=0pt},
  loop/.style={looseness=65, thick}
]
\node[vertex] (v00) at (0,2) {}; 
\node[vertex] (v01) at (3,2) {}; 
\node[vertex] (v10) at (0,0) {}; 
\node[vertex] (v11) at (3,0) {}; 
\node[left=2pt] at (v00) {$(0,0)$};
\node[right=2pt] at (v01) {$(0,1)$};
\node[left=2pt] at (v10) {$(1,0)$};
\node[right=2pt] at (v11) {$(1,1)$};
\draw (v00) -- (v11);  
\draw (v01) -- (v11);  
\draw (v10) -- (v11);  
\draw (v01) edge[loop above, min distance=10pt] (v01);
\draw (v10) edge[loop below, min distance=10pt] (v10);
\draw (v11) edge[loop below, min distance=10pt] (v11);
%
\end{tikzpicture}
\caption{The graph $\Gamma_R$.}
\label{fig:fig}
\end{figure}
\end{center}
Note that $$V(\Delta)=  \{(0,0),(0, 1), (1,0),(1,1)\}$$ and
\begin{align*}
 N_\Delta((0,0))=&\ \{(1,1)\}, \\
 N_\Delta((0,1))=&\ \{(0,1),(1,1)\},\\
N_\Delta((1,0))=&\ \{(1,0),(1,1)\}, \\
N_\Delta((1,1))=&\ \{(0,0),(0,1),(1,0),(1,1)\},
\end{align*}
so $\Delta$ is connected, non-bipartite, and twin-free.
Next we prove that $\Delta$ is stable and prime with respect to the direct product.

\begin{lemma}\label{le:lecps}
The graph $\Delta$ is stable and prime with respect to the direct product.
\end{lemma}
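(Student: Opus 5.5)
The plan is to verify both properties directly from the explicit neighbourhoods of $\Delta$ listed above. Stability will come from the TF-automorphism criterion, and primality from a counting argument on loops.

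\emph{Stability.} Since $\Delta$ is connected and non-bipartite (it has loops), $\Delta\times K_2$ is connected and bipartite. Every automorphism of $\Delta\times K_2$ either preserves or swaps the two parts $V(\Delta)\times\{0\}$ and $V(\Delta)\times\{1\}$. After composing with the swap $(u,i)\mapsto(u,1-i)$, a part-preserving automorphism has the form $(u,0)\mapsto(u^\alpha,0)$, $(u,1)\mapsto(u^\beta,1)$, and then $(\alpha,\beta)\in\TFA(\Delta)$. This is the argument of Lemma~\ref{lem:TF}, and it does not use looplessness. So it suffices to show that every TF-automorphism $(\alpha,\beta)$ of $\Delta$ is diagonal.

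The defining condition gives $N_\Delta(u^\alpha)=N_\Delta(u)^\beta$ for all $u$. Since $\Delta$ is undirected, $(\beta,\alpha)$ is also a TF-automorphism, so $N_\Delta(u^\beta)=N_\Delta(u)^\alpha$ as well. In particular both $\alpha$ and $\beta$ preserve $|N_\Delta(u)|$, and these values are $1,2,2,4$. Hence $\alpha$ and $\beta$ both fix $(0,0)$ and $(1,1)$, and each either fixes or swaps $(0,1)$ and $(1,0)$.
\begin{itemize}
\item If $\alpha$ swaps $(0,1)$ and $(1,0)$, then $\{(0,1),(1,1)\}^\beta=N_\Delta((1,0))=\{(1,0),(1,1)\}$, so $\beta$ also swaps them.
\item If $\alpha$ fixes $(0,1)$, the same identity forces $\beta$ to fix $(0,1)$.
\end{itemize}
Thus $\alpha=\beta$ in every case, and $\Delta$ is stable.

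\emph{Primality.} Suppose $\Delta\cong A\times B$ in the loops-allowed category with neither factor the unit $K_1^{\circ}$ (one vertex with a loop). Since $|V(\Delta)|=4$, either both factors have $2$ vertices, or one factor has a single vertex. A one-vertex factor is either the unit or the loopless $K_1$. The loopless $K_1$ would make the product edgeless, which $\Delta$ is not. So $|V(A)|=|V(B)|=2$.

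A vertex $(a,b)$ of $A\times B$ carries a loop exactly when $a$ and $b$ both carry loops, so $\ell(A\times B)=\ell(A)\ell(B)$. But $\ell(\Delta)=3$ is prime, which would force $\ell(A)=3$ or $\ell(B)=3$ on a $2$-vertex graph, a contradiction. Hence $\Delta$ is prime.

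The only delicate point is the first step of the stability part: we apply the TF criterion to a graph with loops, so we should check that the reduction from $\aut(\Delta\times K_2)$ to $\TFA(\Delta)$ really goes through. I would handle this by the direct argument given above, which uses only connectivity of $\Delta\times K_2$ and the fact that an automorphism of a connected bipartite graph either preserves or swaps the two parts.
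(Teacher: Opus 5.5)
Your proposal is correct and follows the paper's proof. For stability, the degrees $1,2,2,4$ pin $(0,0)$ and $(1,1)$ under both $\alpha$ and $\beta$, and $\alpha=\beta$ is then forced on $\{(0,1),(1,0)\}$; you get this from $N_\Delta(u^\alpha)=N_\Delta(u)^\beta$, where the paper uses the loop arc at $(0,1)$. For primality, both arguments use $\ell(A\times B)=\ell(A)\ell(B)$ with $\ell(\Delta)=3$. Two extra checks in your version are worth keeping: you verify directly that the reduction from $\aut(\Delta\times K_2)$ to $\TFA(\Delta)$ still holds with loops, and you exclude a loopless one-vertex factor. The paper passes over both points quickly.
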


\begin{proof}
We first prove that $\Delta$ is prime. Suppose that there are two nontrivial graphs $\Delta_1$ and $\Delta_2$ such that $\Delta\cong \Delta_1\times \Delta_2$.
Since $|V(\Delta)|=4$, both $\Delta_1$ and $\Delta_2$ must have order $2$. Note that a vertex $(x,y)$ has a loop in $\Delta_1\times \Delta_2$ if and only if $x$ has a loop in $\Delta_1$ and $y$ has a loop in $\Delta_2$. Hence
\[
\ell(\Delta_1\times\Delta_2)=\ell(\Delta_1)\ell(\Delta_2).
\]
Note that $\ell(\Delta)=3$, so $\ell(\Delta_1)\ell(\Delta_2)=3$. However, this is impossible because $\ell(\Delta_1),\ell(\Delta_2)\in\{0,1,2\}$. Therefore $\Delta$ is prime.

It remains to prove that $\Delta$ is stable. Suppose that it is unstable. Since $\Delta$ is non-bipartite, by \cite[Lemma~3.2(c)]{WQX2025}, there exists a non-diagonal TF-automorphism $(\alpha, \beta)\in \TFA(\Delta)$. Since $\alpha$ and $\beta$ are permutations of $V(\Delta)$, for each vertex $x\in V(\Delta)$, the size of $N_{\Delta}(x)$ is preserved under $\alpha$.
Since $$|N_\Delta((0,0))|=1,\qquad |N_\Delta((1,1))|=4,$$ we have \[(0,0)^\alpha=(0,0)^\beta=(0,0),\qquad
(1,1)^\alpha=(1,1)^\beta=(1,1).
\]
Hence both $\alpha$ and $\beta$ either fix $(0,1)$ and $(1,0)$, or interchange them.

We claim that $\alpha=\beta$. Suppose one of $\alpha,\beta$ interchanges $(0,1)$ and $(1,0)$, while the other fixes them.
Without loss of generality, we may assume that $\alpha$ interchanges $(0,1)$ and $(1,0)$ but $\beta$ fixes them.
Since $(0,1)$ has a loop, we have
$$((0,1),(0,1))\in A(\Delta).$$ Then
$$((0,1)^\alpha,(0,1)^\beta)=((1,0),(0,1))\in A(\Delta),$$
which is a contradiction.
Similarly, we can prove the other case. Hence $\alpha=\beta$.
Therefore all TF-automorphisms of $\Delta$ are diagonal. Combining this with Lemma \ref{lem:TF}, we conclude that $\Delta$ is stable.
\end{proof}

Next, we prove that $(\Delta,\Omega)$ is nontrivially unstable.

\begin{lemma}\label{le:legsnu}
The graph pair $(\Delta,\Omega)$ is nontrivially unstable.
\end{lemma}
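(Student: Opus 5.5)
We must show two things: that $(\Delta,\Omega)$ is a nontrivial graph pair, and that it is unstable. Since $\Delta=\Gamma_R$ and $\Omega=\Sigma_R$ for $R=\{(0,1),(1,0),(1,1)\}$, Lemma~\ref{le:letsa} already supplies a candidate unexpected automorphism $\phi_R$. The plan is to verify the defining conditions of a nontrivial pair for $(\Delta,\Omega)$ and then show that $\phi_R$ does not lie in $\aut(\Delta)\times\aut(\Omega)$.

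\textbf{Nontriviality.} Connectedness, non-bipartiteness and twin-freeness of $\Delta$ were recorded from the neighbourhood list above. For $\Omega$, the edges $x_0y_1$, $x_1y_1$ and $x_1y_0$ form a path $P_4$. Hence $\Omega$ is connected and bipartite. The neighbourhoods $N(x_0)=\{y_1\}$, $N(x_1)=\{y_0,y_1\}$, $N(y_0)=\{x_1\}$ and $N(y_1)=\{x_0,x_1\}$ are pairwise distinct, so $\Omega$ is twin-free. For coprimality, note that $\Delta$ is prime by Lemma~\ref{le:lecps}. The only graph $\Delta'$ of order greater than $1$ with $\Delta\cong\Delta''\times\Delta'$ is therefore $\Delta$ itself, up to the trivial factor $K_1^{\circ}$ carrying a loop. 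So a common factor would force $\Omega\cong\Omega'\times\Delta$. By order, $\Omega'$ would have one vertex, and $\Omega$ would then be either $\Delta$ (if $\Omega'$ has a loop) or edgeless (if it does not). Neither holds: $\Omega$ is loopless while $\Delta$ has loops, and $\Omega$ has edges. Hence the two graphs are coprime, and exactly one of them is bipartite.

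\textbf{Instability.} By Lemma~\ref{le:letsa}, $\phi_R\in\aut(\Delta\times\Omega)$. It remains to see that $\phi_R$ breaks the $\Omega$-partition, so that it cannot act componentwise. On the $U$-side we have $((a,b),x_i)^{\phi_R}=((a,i),x_b)$. Take the block $V(\Delta)\times\{x_0\}$. It contains $((0,0),x_0)$, which maps to $((0,0),x_0)$, and $((0,1),x_0)$, which maps to $((0,0),x_1)$. These two images lie in different blocks of the $\Omega$-partition. So $\phi_R$ is a $\Omega$-mixer, and in particular $\phi_R\notin\aut(\Delta)\times\aut(\Omega)$, because componentwise automorphisms preserve both partitions. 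Thus $(\Delta,\Omega)$ is unstable.

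The main care point is the coprimality check in the loops-allowed setting, where the trivial looped factor must be handled. All other steps are direct computations.
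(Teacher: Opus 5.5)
Your proof is correct and follows the paper's route: you check that $(\Delta,\Omega)$ is a nontrivial pair, then use the involution $\phi_R$ of Lemma~\ref{le:letsa}, and you verify explicitly that it is an $\Omega$-mixer, which the paper only asserts. Your coprimality argument is cleaner than the paper's, because it uses only the primality of $\Delta$ together with an order count. The paper also appeals to the primality of $P_4$, and that claim fails once loops are allowed: $P_4\cong K_2\times L$, where $L$ is an edge carrying a loop at one end.
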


\begin{proof}
We first prove that $(\Delta,\Omega)$ is a nontrivial graph pair. The graph $\Omega\cong P_4$ is connected and twin-free.
It remains to show that $\Delta$ and $\Omega$ are coprime with respect to the direct product. Since $\Omega\cong P_4$ is prime, loopless and $\Delta$ admits loops and is prime by Lemma \ref{le:lecps}, the result follows. Hence $(\Delta, \Omega)$ is a nontrivial graph pair.
From Lemma \ref{le:letsa}, we can construct an automorphism $\phi_R$ of $\Delta\times\Omega$ such that $\phi_R^2=\id$ and $\phi_R$ is an $\Omega$-mixer. Thus $\phi_R$ is an unexpected automorphism of $\Delta\times\Omega$, and so $(\Delta,\Omega)$ is unstable.
\end{proof}

\begin{remark}
 Lemma \ref{le:legsnu} shows that the condition ``$\Gamma$ is loopless'' is necessary in Conjecture \ref{conj:conjsps}.
As mentioned in Lemma \ref{le:lecps}, the graph $\Delta$ is prime and stable. However, by Lemma \ref{le:legsnu}, the pair $(\Delta,\Omega)$ is nontrivially unstable. Hence $\Delta$ is not Sym-prime.
Thus Conjecture \ref{conj:conjsps} is false when loops are allowed.
\end{remark}
From Lemma \ref{le:legsnu} we obtain answers to \cite[Question~3.5]{GLX2025}.

\begin{question}\cite[Question~3.5]{GLX2025}
\label{qu:qug1}
Is there a nontrivially unstable graph pair $(\Gamma,\Sigma)$, where $\Gamma$ is non-bipartite and $\Sigma$ is bipartite, such that there is no non-diagonal $\Sigma$-automorphism of $\Gamma$?
\end{question}

\begin{Answer}~\ref{qu:qug1}.
Let $\Gamma:=\Delta$ and $\Sigma:=\Omega$ be as in Lemma \ref{le:legsnu}.
 Lemma \ref{le:legsnu} implies that $(\Gamma,\Sigma)$ is unstable. However, there is no non-diagonal $\Sigma$-automorphism of $\Gamma$ since every non-diagonal $\Sigma$-automorphism would induce a non-diagonal TF-automorphism of $\Gamma$.
Note that $\Gamma$ admits a non-diagonal TF-automorphism if and only if $\Gamma$ is unstable by Lemma \ref{lem:TF}.
However, Lemma \ref{le:lecps} shows that $\Gamma$ is stable.
Thus $\Gamma$ does not admit non-diagonal TF-automorphisms, and consequently $\Gamma$ has no non-diagonal $\Sigma$-automorphisms.
 Thus there is a nontrivially unstable graph pair $(\Gamma,\Sigma)$ with the required properties in Question \ref{qu:qug1}.
\end{Answer}

We now recall Question~\ref{qu:qug2} and give a positive example.

\begin{question}\cite[Question~3.6]{GLX2025}
\label{qu:qug2}
Is there a nontrivially unstable graph pair $(\Gamma,\Sigma)$, where $\Gamma$ is non-bipartite and $\Sigma$ is bipartite, such that there exists a $\Sigma$-mixer of $\Gamma\times\Sigma$ and each connected component of $S(\Sigma)$ is Cartesian-quasicoprime to $S(\Gamma)$?
\end{question}

\begin{definition}\label{de:deg}
Let $A=\{a_0,a_1,a_2\}$ and $B=\{b_0,b_1\}$. Let $S\subseteq A\times B$ be the relation
\[
S=\{(a_0,b_0),(a_1,b_0),(a_1,b_1),(a_2,b_1)\}.
\]
Define a graph $\Gamma_S$ (see Fig. \ref{fig:fig1}) with vertex set $V(\Gamma_S)=A\times B$. Two vertices $(a,b),(c,d)\in V(\Gamma_S)$ are adjacent if and only if $(a,d)\in S$ and $(c,b)\in S$.
\end{definition}

\begin{center}
\begin{figure}[htbp]
\begin{tikzpicture}[
  vertex/.style={circle, fill=white, draw, minimum size=3pt, inner sep=0pt},
  loop/.style={looseness=65, thick}
]
\node[vertex] (v0) at (0,4) {};
\node[vertex] (v1) at (4,4) {};
\node[vertex] (v2) at (0,2) {};
\node[vertex] (v3) at (4,2) {};
\node[vertex] (v4) at (0,0) {};
\node[vertex] (v5) at (4,0) {};
\node[right=2pt] at (v0) {$(a_0,b_0)$};
\node[right=2pt] at (v1) {$(a_0,b_1)$};
\node[below=2pt] at (v2) {$(a_1,b_0)$};
\node[above=2pt] at (v3) {$(a_1,b_1)$};
\node[below=2pt] at (v4) {$(a_2,b_0)$};
\node[below=2pt] at (v5) {$(a_2,b_1)$};
\draw (v0) edge[loop left, min distance=12pt] (v0);
\draw (v2) edge[loop left, min distance=12pt] (v2);
\draw (v3) edge[loop right, min distance=12pt] (v3);
\draw (v5) edge[loop right, min distance=12pt] (v5);
\draw (v0) -- (v2);
\draw (v1) -- (v2);
\draw (v1) -- (v4);
\draw (v2) -- (v3);
\draw (v3) -- (v4);
\draw (v3) -- (v5);
\end{tikzpicture}
\caption{The graph $\Gamma_S$.}
\label{fig:fig1}
\end{figure}
\end{center}

\begin{definition}\label{de:des}
Let $\Sigma_S$ be the bipartite graph with bipartition $U=\{x_{b_0},x_{b_1}\}$ and $W=\{y_{a_0},y_{a_1},y_{a_2}\}$,
where $x_j$ is adjacent to $y_i$ in $\Sigma_S$ if and only if $(i,j)\in S$. Clearly, $\Sigma_S\cong P_5$.
\end{definition}

From Fig.~\ref{fig:fig1}, it is easy to see that $\Gamma_S$ is connected.
 Moreover, we have
 \begin{align*}
 N_{\Gamma_S}((a_0,b_0))&=\{(a_0,b_0),(a_1,b_0)\},\\ N_{\Gamma_S}((a_0,b_1))&=\{(a_1,b_0),(a_2,b_0)\},\\
N_{\Gamma_S}((a_1,b_0))&=\{(a_0,b_0),(a_0,b_1),(a_1,b_0),(a_1,b_1)\},\\
N_{\Gamma_S}((a_1,b_1))&=\{(a_1,b_0),(a_1,b_1),(a_2,b_0),(a_2,b_1)\},\\
N_{\Gamma_S}((a_2,b_0))&=\{(a_0,b_1),(a_1,b_1)\},\\
N_{\Gamma_S}((a_2,b_1))&=\{(a_1,b_1),(a_2,b_1)\}.
\end{align*}
Clearly, $\Gamma_S$ is twin-free.

\begin{lemma}\label{le:leglx3.6}
The graph pair $(\Gamma_S,\Sigma_S)$ is a nontrivial graph pair and $\Gamma_S\times\Sigma_S$
admits a $\Sigma_S$-mixer.
\end{lemma}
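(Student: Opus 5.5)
We verify the conditions for a nontrivial pair one at a time, and then write down an explicit mixer by adapting the map of Lemma~\ref{le:letsa} to the non-square relation $S\subseteq A\times B$.

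\emph{Connectedness and twin-freeness.} Connectedness of $\Gamma_S$ is read off Fig.~\ref{fig:fig1}, and twin-freeness from the six neighbourhoods listed above, which are pairwise distinct. The graph $\Sigma_S\cong P_5$ is connected and twin-free, since its two leaves have distinct neighbours.

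\emph{Bipartiteness.} $\Gamma_S$ carries loops, so it is non-bipartite, while $\Sigma_S$ is bipartite.

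\emph{Coprimality.} $|V(\Sigma_S)|=5$ is prime, so $\Sigma_S$ is direct-prime. Any common factor $\Delta$ of order greater than $1$ would therefore force $\Sigma_S\cong\Sigma'\times\Delta$ with $|V(\Sigma')|=1$. Since $\Sigma_S$ has no loops, this leaves only two possibilities.
\begin{itemize}
\item If $\Sigma'$ is a looped vertex, then $\Delta\cong P_5$.
\item If $\Sigma'$ is loopless, then $\Sigma'\times\Delta$ is edgeless, which is impossible.
\end{itemize}
In the first case $\Gamma_S\cong\Gamma'\times P_5$ would need $5\mid 6$, a contradiction. Hence the two graphs are coprime. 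The only real care needed is to treat the loops-allowed category correctly, and the order count handles this directly.

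\emph{The mixer.} Define $\phi$ on $V(\Gamma_S)\times U$ by
\[
\bigl((a,b),x_j\bigr)^{\phi}=\bigl((a,j),x_b\bigr),
\]
and on $V(\Gamma_S)\times W$ by
\[
\bigl((a,b),y_i\bigr)^{\phi}=\bigl((i,b),y_a\bigr).
\]
This is well defined: $j,b\in B$ and $i,a\in A$, so the images lie in $V(\Gamma_S)\times U$ and $V(\Gamma_S)\times W$ respectively. Each formula swaps two coordinates, so $\phi^2=\id$ and $\phi$ is a bijection.

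Every edge of $\Gamma_S\times\Sigma_S$ joins $((a,b),x_j)$ to $((e,f),y_i)$. Such a pair is adjacent exactly when
\[
(a,f)\in S,\qquad (e,b)\in S,\qquad (i,j)\in S.
\]
The images $((a,j),x_b)$ and $((i,f),y_e)$ are adjacent exactly when
\[
(a,f)\in S,\qquad (i,j)\in S,\qquad (e,b)\in S.
\]
These are the same three conditions, so $\phi$ and $\phi^{-1}=\phi$ preserve edges, and $\phi$ is an automorphism. No symmetry of $S$ is needed here, because the coordinates are arranged so that first entries always come from $A$ and second entries from $B$.

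\emph{$\phi$ breaks the $\Sigma_S$-partition.} Take $((a_0,b_0),x_{b_0})$ and $((a_0,b_1),x_{b_0})$. Both lie in $V(\Gamma_S)\times\{x_{b_0}\}$, but their images have second coordinates $x_{b_0}$ and $x_{b_1}$. So $\phi$ is a $\Sigma_S$-mixer, which completes the argument.
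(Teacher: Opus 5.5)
Your proof is correct and follows the paper's argument: coprimality via the order count $5\nmid 6$, and the same coordinate-swapping involution $\phi_S$, checked by matching the three conditions $(a,f),(e,b),(i,j)\in S$ before and after applying the map. Your case split on whether the one-vertex cofactor $\Sigma'$ carries a loop is redundant, since $|V(\Delta)|$ must divide both $5$ and $6$ and that alone settles it; on the other hand, your explicit pair of vertices showing that $\phi$ breaks the $\Sigma_S$-partition supplies what the paper dismisses with ``clearly''.
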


\begin{proof}
Any common direct-product factor of $\Gamma_S$ and $\Sigma_S$ would have order greater than $1$ and would divide both $|V(\Gamma_S)|=6$ and $|V(\Sigma_S)|=5$. Since $\gcd(6,5)=1$, no such factor exists. Hence $\Gamma_S$ and $\Sigma_S$ are coprime with respect to the direct product. Note that $\Gamma_S$ and $\Sigma_S$ are connected, twin-free and exactly one of them, namely $\Sigma_S$, is bipartite. Hence $(\Gamma_S,\Sigma_S)$ is a nontrivial graph pair.

We define a map $\phi_S$ from $V(\Gamma_S\times\Sigma_S)$ to itself as follows:
 for vertices $((a,b),x_i)$ in $\Gamma_S\times U$, \[
\bigl((a,b),x_i\bigr)^{\phi_S}=\bigl((a,i),x_b\bigr),
\]
and for vertices $((a,b),y_j)$ in $\Gamma_S\times W$,
\[
\bigl((a,b),y_j\bigr)^{\phi_S}=\bigl((j,b),y_a\bigr).
\]
We now check that $\phi_S$ is an automorphism of $\Gamma_S\times\Sigma_S$. Consider two vertices
$\bigl((a,b),x_i\bigr)$ and $\bigl((e,f),y_j\bigr)$ in $V(\Gamma_S\times\Sigma_S)$.
They are adjacent in $\Gamma_S\times\Sigma_S$ if and only if $(a,b)$ is adjacent to $(e,f)$ in $\Gamma_S$ and $x_i$ is adjacent to $y_j$ in $\Sigma_S$.
By Definitions \ref{de:deg} and \ref{de:des}, this is equivalent to
\[
(a,f)\in S,\qquad
(e,b)\in S,\qquad
(j,i)\in S.
\]
The images are $\bigl((a,i),x_b\bigr)$ and $\bigl((j,f),y_e\bigr)$. These two image vertices are adjacent in $\Gamma_S\times\Sigma_S$ if and only if $(a,i)$ is adjacent to $(j,f)$ in $\Gamma_S$ and $x_b$ is adjacent to $y_e$ in $\Sigma_S$.
Again by Definitions \ref{de:deg} and \ref{de:des}, this is equivalent to
\[
(a,f)\in S,\qquad
(j,i)\in S,\qquad
(e,b)\in S.
\]
 Therefore $\phi_S$ preserves adjacency of $\Gamma_S\times\Sigma_S$. Combining this with $\phi_S^2=\id$, we have $\phi_S\in\aut(\Gamma_S\times\Sigma_S)$. Clearly, $\phi_S$ is a $\Sigma_S$-mixer.
\end{proof}

We now compute the Cartesian skeletons of $\Gamma_S$ and $\Sigma_S$.

\begin{lemma}\label{le:lecsg}
The Cartesian skeleton of $\Gamma_S$ is $S(\Gamma_S)\cong P_3\Box P_2$ and $S(\Sigma_S)=P_2\sqcup P_3$.
\end{lemma}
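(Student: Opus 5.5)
Throughout, write $p=(a_0,b_0)$, $q=(a_0,b_1)$, $r=(a_1,b_0)$, $s=(a_1,b_1)$, $t=(a_2,b_0)$ and $u=(a_2,b_1)$. The plan is to compute both skeletons directly from Definition~\ref{def:skeleton}, using the neighbourhoods listed above (loops included):
\[
N(p)=\{p,r\},\quad N(q)=\{r,t\},\quad N(r)=\{p,q,r,s\},\quad N(s)=\{r,s,t,u\},\quad N(t)=\{q,s\},\quad N(u)=\{s,u\}.
\]

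\emph{Step 1: the Boolean square.} Intersecting these neighbourhoods pairwise shows that the pairs with empty common neighbourhood are exactly $\{p,t\},\{p,u\},\{q,t\},\{q,u\}$. So $B(\Gamma_S)$ has the remaining $11$ edges
\[
pq,\ pr,\ ps,\ qr,\ qs,\ rs,\ rt,\ ru,\ st,\ su,\ tu .
\]

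\emph{Step 2: the dispensable edges.} The target $P_3\Box P_2$, with $P_3$ on $A$ along $a_0a_1a_2$ and $P_2$ on $B$, consists of the $7$ edges
\[
pr,\ rt,\ qs,\ su,\ pq,\ rs,\ tu .
\]
So I will show that exactly $ps,qr,ru,st$ are dispensable, giving each a witness $w$. Each of these four edges has common neighbourhood of size one.
\begin{itemize}
\item For $ps$, with common neighbourhood $\{r\}$, take $w=r$. Then $N(p)\cap N(r)=\{p,r\}$ and $N(s)\cap N(r)=\{r,s\}$, both strictly containing $\{r\}$.
\item For $qr$, with common neighbourhood $\{r\}$, take $w=s$. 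Then $N(q)\cap N(s)=\{r,t\}$ and $N(r)\cap N(s)=\{r,s\}$.
\item For $ru$, with common neighbourhood $\{s\}$, take $w=s$. Then $N(r)\cap N(s)=\{r,s\}$ and $N(u)\cap N(s)=\{s,u\}$.
\item For $st$, with common neighbourhood $\{s\}$, take $w=r$. Then $N(s)\cap N(r)=\{r,s\}$ and $N(t)\cap N(r)=\{q,s\}$.
\end{itemize}
The last two cases are the images of the first two under the automorphism $p\leftrightarrow u$, $q\leftrightarrow t$, $r\leftrightarrow s$ of $\Gamma_S$, which halves the work.

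\emph{Step 3: the main obstacle.} The remaining task is to show that none of the seven edges $pr,rt,qs,su,pq,rs,tu$ is dispensable. This is a finite check over all $w\in V(\Gamma_S)$: for each edge $\{x,y\}$ and each $w$, one verifies that both strict-containment alternatives fail on at least one side. Using the same symmetry, which swaps $pr\leftrightarrow su$, $pq\leftrightarrow tu$ and $rt\leftrightarrow qs$ and fixes $rs$, only four edges need checking.

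For $rs$, the common neighbourhood $\{r,s\}$ is contained in $N(x)\cap N(w)$ only if $w\in\{r,s\}$, which gives nothing strict. For $pr$, the common neighbourhood $\{p,r\}=N(p)$ cannot be strictly enlarged on the $p$-side, and no $N(w)$ lies strictly between $N(p)$ and $N(r)$. The cases $pq$ (common neighbourhood $\{r\}$) and $rt$ (common neighbourhood $\{q\}$) need slightly more care and are handled by listing the candidates $w$; for example, for $rt$ only $w\in\{q,s\}$ could enlarge the $r$-side, but $N(t)\cap N(q)=\varnothing$ and $N(t)\cap N(s)=\{s\}$. Once this is done, the map $(a_i,b_j)\mapsto(i,j)$ is an isomorphism $S(\Gamma_S)\cong P_3\Box P_2$.

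\emph{Step 4: the skeleton of $\Sigma_S$.} Here $\Sigma_S$ is the path $y_{a_0}x_{b_0}y_{a_1}x_{b_1}y_{a_2}$. In $B(\Sigma_S)$, the $U$-side has the single edge $x_{b_0}x_{b_1}$, via the common neighbour $y_{a_1}$. The $W$-side has edges $y_{a_0}y_{a_1}$ and $y_{a_1}y_{a_2}$, while $y_{a_0}$ and $y_{a_2}$ have no common neighbour. By Lemma~\ref{le:lecsb}\eqref{it:itcsbb}, each side induces a connected component of $S(\Sigma_S)$. Since these two restrictions of $B(\Sigma_S)$ are trees, no edge can be deleted without disconnecting a component. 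Hence $S(\Sigma_S)=P_2\sqcup P_3$.
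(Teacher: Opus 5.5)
Your approach is the same as the paper's: compute $B(\Gamma_S)$ from the neighbourhood lists, exhibit witnesses for the four edges $ps,qr,ru,st$, and read off $P_3\Box P_2$. Your witness $w=r$ for $ps$ is exactly the paper's.

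Your version is more careful than the paper's in three places. First, the paper asserts $N_{\Gamma_S}((a_0,b_0))\cap N_{\Gamma_S}((a_1,b_1))=\varnothing$. This is false: the intersection is $\{r\}$, as you say, and an empty intersection would contradict $ps\in E(B(\Gamma_S))$. Second, the paper never checks that the seven remaining edges are non-dispensable; your reduction via the automorphism $(a_i,b_j)\mapsto(a_{2-i},b_{1-j})$ does this. Third, the paper just says ``easy to check'' for $S(\Sigma_S)$. Your argument fills this in: a connected spanning subgraph of a tree is the tree itself, and Lemma~\ref{le:lecsb}\eqref{it:itcsbb} supplies the connectivity.

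There is, however, a slip in your $rt$ check. The common neighbourhood is $N(r)\cap N(t)=\{p,q,r,s\}\cap\{q,s\}=\{q,s\}$, not $\{q\}$. Your claim that only $w\in\{q,s\}$ could enlarge the $r$-side is also false. Indeed $N(r)\cap N(q)=\{r\}$ and $N(r)\cap N(s)=\{r,s\}$, and neither strictly contains $\{q,s\}$.

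The correct argument is the one you gave for $pr$:
\begin{itemize}
\item since $N(r)\cap N(t)=N(t)$, the first alternative on the $t$-side can never hold;
\item no $N(w)$ lies strictly between $N(t)=\{q,s\}$ and $N(r)=\{p,q,r,s\}$, because no vertex has a neighbourhood of size $3$.
\end{itemize}
So the $t$-side condition fails for every $w$, and $rt$ (hence also $qs$) is non-dispensable.

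In the $rs$ case, ``gives nothing strict'' is imprecise, since $w=r$ does give $N(r)\cap N(r)\supsetneq\{r,s\}$ on the $r$-side. The conclusion still holds, because any $w$ equal to an endpoint of the edge fails the condition for the other endpoint. You should also note that the chain alternative is impossible there, since $N(r)$ and $N(s)$ are incomparable.

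With these corrections the proof is complete.
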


\begin{proof}
It is easy to check that $S(\Sigma_S)=P_2\sqcup P_3$. The Boolean square $B(\Gamma_S)$ has edges:
\begin{align*}
&\{(a_0,b_0),(a_1,b_0)\}, \qquad \{(a_0,b_1),(a_1,b_1)\}, \qquad \{(a_1,b_0),(a_2,b_0)\}, \\
&\{(a_1,b_1),(a_2,b_1)\}, \qquad \{(a_0,b_0),(a_0,b_1)\}, \qquad \{(a_1,b_0),(a_1,b_1)\}, \\
&\{(a_2,b_0),(a_2,b_1)\}, \qquad \{(a_0,b_0),(a_1,b_1)\}, \qquad \{(a_0,b_1),(a_1,b_0)\}, \\
&\{(a_1,b_0),(a_2,b_1)\}, \qquad \{(a_1,b_1),(a_2,b_0)\}.
\end{align*}
Note that
\begin{align*}
&N_{\Gamma_S}((a_0,b_0))\cap N_{\Gamma_S}((a_1,b_1))=\emptyset,\\
&N_{\Gamma_S}((a_0,b_0))\cap N_{\Gamma_S}((a_1,b_0))=\{(a_0,b_0),(a_1,b_0)\},\\
&N_{\Gamma_S}((a_1,b_1))\cap N_{\Gamma_S}((a_1,b_0))=\{(a_1,b_0),(a_1,b_1)\}.
\end{align*}
We have
\[N_{\Gamma_S}((a_0,b_0))\cap N_{\Gamma_S}((a_1,b_1))\subsetneq N_{\Gamma_S}((a_0,b_0))\cap N_{\Gamma_S}((a_1,b_0))\] and
\[N_{\Gamma_S}((a_0,b_0))\cap N_{\Gamma_S}((a_1,b_1))\subsetneq N_{\Gamma_S}((a_1,b_1))\cap N_{\Gamma_S}((a_1,b_0)).\]
So the edge $$\{(a_0,b_0),(a_1,b_1)\}$$ is dispensable.
Similarly, we obtain that $$\{(a_0,b_1),(a_1,b_0)\}, \qquad \{(a_1,b_0),(a_2,b_1)\}, \qquad \{(a_1,b_1),(a_2,b_0)\}$$ are dispensable. So the non-dispensable edges are
\begin{align*}
&\{(a_0,b_0),(a_1,b_0)\}, \qquad \{(a_0,b_1),(a_1,b_1)\},\qquad  \{(a_1,b_0),(a_2,b_0)\},\\
&\{(a_1,b_1),(a_2,b_1)\}, \qquad \{(a_0,b_0),(a_0,b_1)\}, \qquad \{(a_1,b_0),(a_1,b_1)\}, \qquad \{(a_2,b_0),(a_2,b_1)\}.
\end{align*}
These are exactly the edges of the Cartesian product $P_3\Box  P_2$.
Thus the Cartesian skeleton of $\Gamma_S$ is exactly $P_3\Box  P_2$.
\end{proof}

\begin{Answer}~\ref{qu:qug2}. Let $\Gamma:=\Gamma_S$ and $\Sigma:=\Sigma_S$ be defined as in Definitions \ref{de:deg} and \ref{de:des}. We know from Lemma \ref{le:lecsg} that $\Gamma$ and $\Sigma$ are Cartesian-quasicoprime.
By Lemma \ref{le:leglx3.6}, the graph pair $(\Gamma,\Sigma)$ is a nontrivial graph pair.
Moreover, we constructed a $\Sigma$-mixer of $\Gamma\times \Sigma$ in Lemma \ref{le:leglx3.6}. Therefore there exists a nontrivially unstable graph pair $(\Gamma,\Sigma)$, where $\Gamma$ is non-bipartite and $\Sigma$ is bipartite, such that there exists a $\Sigma$-mixer of $\Gamma\times\Sigma$ and each connected component of $S(\Sigma)$ is Cartesian-quasicoprime to $S(\Gamma)$.
\end{Answer}

At the end of this section, we give the answer to \cite[Question~3.7]{GLX2025}. We now restate the question as follows.

\begin{question}\cite[Question~3.7]{GLX2025}
\label{qu:qug3}
Is there a nontrivially unstable graph pair $(\Gamma,\Sigma)$, where $\Gamma$ is non-bipartite and $\Sigma$ is bipartite, such that there is a $\Sigma$-mixer of $\Gamma\times\Sigma$ and $\Gamma$ is factor-loopless?
\end{question}

In order to solve this question, we give the following key result.

\begin{lemma}\label{le:leosmi}
Let $(\Gamma,\Sigma)$ be a nontrivial graph pair, where $\Gamma$ is non-bipartite and
$\Sigma$ is bipartite. Then
$
\Gamma\times\Sigma$ admits no one-sided $\Sigma$-mixer.
\end{lemma}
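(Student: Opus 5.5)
The plan is to reuse the computation already carried out in the proof of Lemma~\ref{pr:prro}, and then turn its conclusion into a contradiction with twin-freeness rather than with stability. Suppose, for a contradiction, that $\Gamma\times\Sigma$ admits a $\Sigma$-mixer $\delta$ whose induced $S$-automorphism $\rho=(\id,\sigma)\delta$ is one-sided. By symmetry of the roles of $U$ and $W$, I assume $\rho_+$ is a $U$-mixer and $\rho_-$ is side-preserving.

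The first step is to check that Lemma~\ref{le:leosd} applies here. Its proof only uses that $\rho$ is an automorphism of $S(\Gamma)\Box S(\Sigma)$ stabilizing $X$ and $Y$, with $\rho_+$ mixing and $\rho_-$ side-preserving. It does not use any property of $\delta$ beyond this. So we get decompositions $S(\Gamma)=M\Box R$ and $S^+(\Sigma)=N\Box T$ with $M\cong N$ nontrivial, the explicit form of $\rho_+$, and $\rho_-=(\lambda,\eta)$.

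The second step is to rerun the argument of Lemma~\ref{pr:prro} verbatim. Since $\rho$ and $\delta$ differ only by $(\id,\sigma)$, we have $\pi_1\delta=\pi_1\rho$. Hence, for every bijection $f\colon V(M)\to V(N)$, the pair $(\xi_f,\lambda)$ is a TF-automorphism of $\Gamma$, where $(m,r)^{\xi_f}=(f(m)^\mu,r^\alpha)$. The crucial point is that the second component $\lambda$ does not depend on $f$. Being a TF-automorphism means that for all $x,y\in V(\Gamma)$ we have $y\in N_\Gamma(x)$ if and only if $y^\lambda\in N_\Gamma(x^{\xi_f})$. Equivalently,
\[
N_\Gamma\bigl(x^{\xi_f}\bigr)=N_\Gamma(x)^{\lambda}\quad\text{for every } x\in V(\Gamma).
\]

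The third step extracts the contradiction. Fix $x=(m,r)\in V(\Gamma)$. Because $|V(N)|=|V(M)|\ge 2$, there are two bijections $f_1,f_2\colon V(M)\to V(N)$ with $f_1(m)\ne f_2(m)$. Since $\mu$ is injective, $x^{\xi_{f_1}}\ne x^{\xi_{f_2}}$. By the displayed identity, both vertices have neighbourhood $N_\Gamma(x)^\lambda$. These are two distinct vertices of $\Gamma$ with the same neighbourhood, contradicting that $\Gamma$ is twin-free. The case where $\rho_-$ mixes and $\rho_+$ is side-preserving is handled symmetrically, exchanging the roles of $X$ and $Y$ and choosing $U$-neighbours of vertices of $W$. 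Hence no one-sided $\Sigma$-mixer exists.

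The main point needing care is the first step: confirming that Lemmas~\ref{le:leosd} and~\ref{pr:prro} really only need a one-sided $\rho$, not unexpectedness of $\delta$ or stability of $\Gamma$. I also need that the injectivity/finite-counting step upgrades the one-directional implication to a genuine TF-automorphism. Both appear to hold from the written proofs. Note that no factor-loopless hypothesis is needed, consistent with the statement.
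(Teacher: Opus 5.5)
Your proposal is correct and is essentially the paper's proof. Both take the TF-automorphisms $(\xi_f,\zeta)$ from Lemma~\ref{pr:prro}, whose second component $\zeta=\lambda$ does not depend on $f$, and use twin-freeness to rule out two distinct $\xi_f$. The paper phrases the last step via $(\xi_g\xi_f^{-1},\id)\in\TFA(\Gamma)$, whereas you read off $N_\Gamma(x^{\xi_{f_1}})=N_\Gamma(x)^{\lambda}=N_\Gamma(x^{\xi_{f_2}})$ pointwise; this is the same argument.
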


\begin{proof}
Suppose, for a contradiction, that $\Gamma\times\Sigma$ admits a one-sided $\Sigma$-mixer.
As in the proof of Lemma \ref{pr:prro}, we may assume that $\rho_{+}$ is a $U$-mixer and $\rho_{-}$ is side-preserving. By Lemma~\ref{le:leosd} we may assume $S(\Gamma)=M\Box R$.

Since $M$ is nontrivial, there exist two distinct bijections
$f,g:V(M)\to V(N)$.
As in the proof of Lemma \ref{pr:prro}, both $(\xi_f,\zeta)$ and $(\xi_g,\zeta)$
are two-fold automorphisms of $\Gamma$, and the second component $\zeta$ is the same. Then $$(\xi_g\xi_f^{-1},\id)\in\TFA(\Gamma).$$
 Thus, for all $u,v\in V(\Gamma)$, $$(u,v)\in A(\Gamma)\Longleftrightarrow ((u^{\xi_g\xi_f^{-1}}),v)\in A(\Gamma).$$ This implies $N_\Gamma(u)=N_\Gamma(u^{\xi_g\xi_f^{-1}})$ for every vertex $u\in V(\Gamma)$.

Since $\Gamma$ is twin-free, we have $u^{\xi_g\xi_f^{-1}}=u$
for every $u\in V(\Gamma)$, and hence
$\xi_g\xi_f^{-1}=\id$.
Thus $\xi_g=\xi_f$.
But by construction, we have $$(m,r)^{\xi_f}=(\mu(f(m)),\alpha(r)),\qquad (m,r)^{\xi_g}=(\mu(g(m)),\alpha(r)).$$
Since $f\ne g$ and $\mu$ is injective, we have
$\xi_f\ne\xi_g$, a contradiction.
Therefore no one-sided $\Sigma$-mixer can exist.
\end{proof}

\begin{Answer}~\ref{qu:qug3}.
Suppose, for a contradiction, that there exists a nontrivially unstable graph pair $(\Gamma,\Sigma)$, where $\Gamma$ is non-bipartite and factor-loopless and $\Sigma$ is bipartite, such that $\Gamma\times\Sigma$ admits a $\Sigma$-mixer.
Let $\rho$ be the induced $S$-automorphism.
By Lemma~\ref{le:leosmi}, $\rho$ cannot be one-sided; hence it must be balanced.
But Lemma~\ref{le:lelp} shows that no balanced $\Sigma$-mixer can occur when $\Gamma$ is factor-loopless. This contradiction proves that no such pair exists.
\end{Answer}

\section*{Acknowledgments}
Wang is supported by the National Natural Science Foundation of China (12401453). Gao is supported by the National Natural Science Foundation of China (12571019), the Natural Science Foundation of Gansu Province (25JRRA644) and Innovative Fundamental Research Group Project of Gansu Province (23JRRA684). The authors are grateful to Binzhou Xia for his feedback on the first draft of the paper.


\begin{thebibliography}{30}

\bibitem{Big01}
N. Biggs, Algebraic Graph Theory, Cambridge University Press, Cambridge, 2001.



\bibitem{FH2022}
B.~Fernandez and A.~Hujdurovi\'{c},
Canonical double covers of circulants,
\textit{J.~Combin.~Theory Ser.~B}, 154 (2022), 49--59.

\bibitem{GLX2025}
Y.~Gan, W.~Liu and B.~Xia,
Unexpected automorphisms in direct product graphs,
\textit{J. Combin. Theory Ser. B}, 171 (2025), 140--164.

\bibitem{HIK2011}
R.~Hammack, W.~Imrich and S.~Klav\v{z}ar,
\textit{Handbook of Product Graphs, 2nd ed.}, CRC Press, 2011.


\bibitem{Hun00}
T.~W. Hungerford, Algebra, Springer, New York, 2003.

\bibitem{HMM2021}
A.~Hujdurovi\'{c}, D.~Mitrovi\'{c} and D.~W.~Morris,
On automorphisms of the double cover of a circulant graph,
\textit{Electron. J.~Combin.}, 28 (2021), no. 2, P4.43.

\bibitem{LM2011}
J.~Lauri and R.~Mizzi,
Two-fold automorphisms of graphs,
\textit{Australas.~J.~Combin.}, 49 (2011) 165--176.


\bibitem{LMS2015}
J.~Lauri, R.~Mizzi and R.~Scapellato,
Unstable graphs: a fresh outlook via TF-automorphisms,
\textit{Ars Math.~Contemp.}, 8 (2015) 115--131.

\bibitem{MSZ1989}
D.~Maru\v{s}i\v{c}, R.~Scapellato and N.~Zagaglia Salvi,
A characterization of particular symmetric $(0,1)$ matrices,
\textit{Linear Algebra Appl.}, 119 (1989), 153--162.

\bibitem{MSZ1992}
D.~Maru\v{s}i\v{c}, R.~Scapellato and N.~Zagaglia Salvi,
Generalized Cayley graphs,
\textit{Discrete Math.}, 102 (1992), 279--285.


\bibitem{Morris2021}
D.~W.~Morris,
On automorphisms of direct products of Cayley graphs on abelian groups,
\textit{Electron. J.~Combin.}, 28 (2021), no.~3, P3.5.

\bibitem{NS1996}
R.~Nedela and M.~\v{S}koviera,
Regular embeddings of canonical double coverings of graphs,
\textit{J.~Combin.~Theory Ser.~B}, 67 (1996), 249--277.

\bibitem{QXZ2019}
Y-L.~Qin, B.~Xia and S.~Zhou,
Stability of circulant graphs,
\textit{J.~Combin.~Theory Ser.~B}, 136 (2019), 154--169.

\bibitem{QXZ2021}
Y-L.~Qin, B.~Xia and S.~Zhou,
Canonical double covers of generalized Petersen graphs, and double generalized Petersen graphs,
\textit{J.~Graph Theory}, 97 (2021), 70--81.

\bibitem{QXZ2024}
Y-L.~Qin, B.~Xia and S.~Zhou,
Stability of graph pairs involving vertex-transitive graphs,
\textit{Discrete Math.}, 347 (2024), no.~4, Paper No.~113856, 6 pp.

\bibitem{QXZZ2021}
Y.-L. Qin, B.~Xia, J.-X. Zhou and S.~Zhou,
Stability of graph pairs,
\textit{J.~Combin.~Theory Ser.~B}, 147 (2021) 71--95.

\bibitem{Surowski2001}
D.~Surowski,
Stability of arc-transitive graphs,
\textit{J.~Graph Theory}, 38 (2001), 95--110.

\bibitem{Surowski2003}
D.~Surowski,
Automorphism groups of certain unstable graphs,
\textit{Math.~Slovaca}, 53 (2003), 215--232.

\bibitem{WQX2025} X. Wang, Y.-L. Qin, B. Xia,
The existence of unexpected automorphisms in direct product graphs, \url{https://arxiv.org/abs/2509.26170}.

\bibitem{WXZ2025}
X.~Wang, S.-J.~Xu and S.~Zhou,
Stability of graph pairs involving cycles,
\textit{Graphs Combin.}, 41 (2025) no.~2, Paper No.~49, 22 pp.



\bibitem{Wilson2008}
S.~Wilson,
Unexpected symmetries in unstable graphs,
\textit{J.~Combin.~Theory Ser.~B}, 98 (2008), 359--383.

\end{thebibliography}
\end{document}